\documentclass[a4paper,reqno,10pt]{amsart}
\makeatletter
\newcommand*{\rom}[1]{\expandafter\@slowromancap\romannumeral #1@}
\makeatother
\usepackage{epsf,graphicx,epsfig}
\usepackage{amscd}
\usepackage{amsmath,latexsym,amssymb,amsthm}
\usepackage[nospace,noadjust]{cite}
\usepackage{textcomp}
\usepackage{setspace,cite}
\usepackage{mathrsfs,amsmath}
\usepackage{chemarrow}
\usepackage{lscape,fancyhdr,fancybox}
\usepackage{stmaryrd}
\usepackage[all,cmtip]{xy}
\usepackage{tikz-cd}
\usepackage{cancel}
\usetikzlibrary{shapes,arrows,decorations.markings}
\usepackage{graphicx}

\usepackage{color}
\usepackage{bbm, dsfont}
\usepackage{xcolor}
\usepackage{url}
\usepackage{enumerate}
\usepackage[mathscr]{euscript}

  \theoremstyle{plain}
\swapnumbers
    \newtheorem{thm}{Theorem}[section]
    \newtheorem{prop}[thm]{Proposition}

    \newtheorem{corollary}[thm]{Corollary}
    
    \newtheorem{subsec}[thm]{}
\theoremstyle{definition}
    \newtheorem{defn}[thm]{Definition}

    \newtheorem{exam}[thm]{Example}

\theoremstyle{remark}

\title{}
\author{}
\date{}
\usepackage{amssymb}

\usepackage{hyperref}
\hypersetup{
colorlinks,
citecolor=blue,
filecolor=black,
linkcolor=blue,
urlcolor=black
}

\begin{document}

\title[]{Difference $2$-algebras and difference $A_\infty$-algebras}

\author{Apurba Das}
\address{Department of Mathematics,
Indian Institute of Technology, Kharagpur 721302, West Bengal, India.}
\email{apurbadas348@gmail.com, apurbadas348@maths.iitkgp.ac.in}

\begin{abstract}
A difference operator on an associative algebra is an algebraic abstraction of the forward and backward difference operators. In this paper, we first introduce difference operators on associative $2$-algebras and consider the category of difference associative $2$-algebras. Subsequently, we also introduce difference operators on a given $A_\infty$-algebra in terms of their Maurer-Cartan characterization. We prove that the category of difference associative $2$-algebras and the category of $2$-term difference $A_\infty$-algebras are equivalent. We characterize skeletal and strict $2$-term difference $A_\infty$-algebras by respectively third cocycles and crossed modules of difference algebras. Finally, we define the notion of a $2$-term bimodule up to homotopy over a difference algebra, which in turn yields a construction of a $2$-term difference $A_\infty$-algebra.
\end{abstract}

\maketitle

\noindent {\sf Mathematics Subject Classification (2020).} 16W99, 18N25, 18N40.


\noindent {\sf Keywords.} Difference operators, associative $2$-algebras, $A_\infty$-algebras, $2$-term bimodule up to homotopy.


\tableofcontents

\section{Introduction}

\subsection{Differential operators, difference operators and difference algebras}
A differential operator on an associative algebra is a generalization of the first-order differentiation operator on the space of differentiable functions. The origin of differential operators lies in differential equations \cite{ritt}. Later, such operators appear in many areas of mathematics, such as Galois theory, differential geometry and algebraic cohomology theory \cite{kolchin,magid,bott}. Loday studied differential associative algebras from an operadic point of view and showed that the corresponding operad is Koszul \cite{loday}. In \cite{das-mandal,guo-li,laza}, the authors have studied differential associative algebras extensively from a cohomological perspective. As a generalization of differential operators, many people have shown their interest in differential operators of weight $\lambda \in {\bf k}$ \cite{guo-li}. This generalized notion provides a unified approach to study usual differential operators (corresponding to the case $\lambda = 0$) and well-known difference operators (corresponding to the case $\lambda = 1$) that appear in Newton's forward and backward interpolations. Recall that a difference operator on an associative algebra $A$ is a linear map $d: A \rightarrow A$ that satisfies
\begin{align}\label{diffe-iden}
    d(ab) = d(a) b + a d(b) + d(a) d(b), \text{ for } a, b \in A.
\end{align}
A difference operator or a differential operator of weight $1$ is also known as a crossed homomorphism \cite{das}. An associative algebra with a difference operator is often called a difference algebra. In \cite{das,das-ram}, the authors have developed cohomology theories associated with difference operators and difference algebras. It is important to note that differential operators and difference operators have also been considered on Lie algebras and other quadratic algebras \cite{lyu,jia}.

\subsection{Homotopy algebras and 2-algebras} 
In mathematics, higher algebraic structures appear when we increase the flexibility of a given type of algebraic structure. This can often be done in two ways: by homotopification and categorification. Homotopy algebras are exactly the homotopification of algebras. The notion of $A_\infty$-algebras, also called strongly homotopy associative algebras, first appeared in the work of Stasheff in his study of topological loop spaces \cite{stas}. Intuitively, an $A_\infty$-algebra is a generalization of graded associative algebras in which the associativity holds up to homotopy \cite{board}. In its simple form, an $A_\infty$-algebra can be described by a square-zero coderivation on the reduced tensor coalgebra of a graded vector space. Such homotopy algebras are also used to describe minimal models of differential graded associative algebras and D-branes in string theory \cite{keller}. The Lie analogue of $A_\infty$-algebras, which are called $L_\infty$-algebras, has many applications in deformation theory, differential geometry and mathematical physics \cite{marco,sheng-zhu}. See also \cite{keller,loday,lada-markl,lada-s} for more details on $A_\infty$-algebras and $L_\infty$-algebras.

\medskip

On the other hand, $2$-algebras are the categorification of algebras. The concept of Lie $2$-algebras first appeared in the work of Baez and Crans \cite{baez-crans} while categorifying Lie algebras. Lie $2$-algebras are closely related to $L_\infty$-algebras as observed by the same authors. In particular, they showed that the category of Lie $2$-algebras is equivalent to the category of $2$-term $L_\infty$-algebras. Subsequently, categorifications of other algebraic structures, such as groups and associative algebras, are also widely studied. Among others, the authors in \cite{das-hom,kh} have considered associative $2$-algebras as the categorification of associative algebras, and find their relations with $2$-term $A_\infty$-algebras. Recently, various authors have considered Rota-Baxter operators and some other linear operators on homotopy algebras and $2$-algebras. Notably, Zhang and Liu \cite{zhang-liu} extend the result of Baez and Crans by showing that the category of Rota-Baxter Lie $2$-algebras is equivalent to the category of $2$-term Rota-Baxter $L_\infty$-algebras.

\subsection{Layout of the paper} The aim of the present paper is to consider higher structures that are obtained by increasing the flexibility of difference algebras.

\medskip

We begin by considering difference operators on associative $2$-algebras. Unlike difference operators on ordinary associative algebras, here we consider the functors defined by the left and right-hand sides of (\ref{diffe-iden}) to be naturally isomorphic, which satisfy a coherence law (cf. Definition \ref{defi-2-diff}). An associative $2$-algebra endowed with a difference operator is called a difference associative $2$-algebra. It follows that a difference associative $2$-algebra can be viewed as the categorification of a difference algebra. In Theorem \ref{thm-categ-diff2}, we show that the collection of all difference associative $2$-algebras and homomorphisms between them is a category, denoted by {\sf DiffAss2}.

\medskip

To find the homotopification of a difference algebra, we first consider the Maurer-Cartan characterization of difference operators on a given associative algebra. Then we generalize this construction to the context of homotopy algebras. Specifically, given an $A_\infty$-algebra $\mathcal{A}$, we construct an $L_\infty$-algebra through Voronov's derived bracket construction whose Maurer-Cartan elements are termed as difference operators on $\mathcal{A}$. An $A_\infty$-algebra endowed with a difference operator is called a difference $A_\infty$-algebra. It follows that a difference $A_\infty$-algebra can be regarded as the homotopification of a difference algebra. Subsequently, we focus on those difference $A_\infty$-algebras whose underlying graded vector space is concentrated in arities $0$ and $1$. We call them $2$-term difference $A_\infty$-algebras. We observe that the collection of $2$-term difference $A_\infty$-algebras and suitable homomorphisms between them is a category, denoted by ${\sf 2TermDiffA_\infty}$ (cf. Theorem \ref{thm-categ-2term}). As a main result of this paper, we prove that the category ${\sf DiffAss2}$ of difference associative $2$-algebras and the category ${\sf 2TermDiffA_\infty}$ of $2$-term difference $A_\infty$-algebras are equivalent (cf. Theorem \ref{thm-main}).


\medskip

Subsequently, we consider skeletal and strict $2$-term difference $A_\infty$-algebras. We show that skeletal $2$-term difference $A_\infty$-algebras are characterized by third cocycles of difference algebras (cf. Theorem \ref{thm-skeletal}). On the other hand, strict $2$-term difference $A_\infty$-algebras have a 1-1 correspondence with crossed modules of difference algebras (cf. Theorem \ref{thm-strict}). Finally, we define the notion of a $2$-term bimodule up to homotopy over an associative algebra, generalizing the classical concept of representation up to homotopy of Lie algebroids \cite{abad-crainic,sheng-zhu}. We observe that the semidirect product of an associative algebra with a $2$-term bimodule up to homotopy carries a $2$-term $A_\infty$-algebra structure (cf. Theorem \ref{thm-2-hom}). Thereafter, we extend this construction to the context of difference algebras. Namely, we consider a $2$-term bimodule up to homotopy over a difference algebra, and show that the corresponding semidirect product is a $2$-term difference $A_\infty$-algebra (cf. Proposition \ref{prop-2-diff-hom}). Hence, one may also construct a difference associative $2$-algebra.

\subsection{Organization} The paper is organized as follows. In Section \ref{sec2}, we recall some necessary backgrounds on difference algebras, associative $2$-algebras and $2$-term $A_\infty$-algebras. In Section \ref{sec3}, we introduce difference operators on associative $2$-algebras and consider the category ${\sf DiffAss2}$ of difference associative $2$-algebras. In Section \ref{sec4}, we consider difference $A_\infty$-algebras, and show that the category ${\sf DiffAss2}$ and the category ${\sf 2TermDiffA_\infty}$ of $2$-term difference $A_\infty$-algebras are equivalent. Finally, in Section \ref{sec5}, we first provide characterizations of skeletal and strict $2$-term difference $A_\infty$-algebras. Then, we define the notion of a $2$-term bimodule up to homotopy over a difference algebra, and study the corresponding semidirect product.

\medskip

All (graded) vector spaces, (graded) linear maps, multilinear maps and unadorned tensor products are over a field ${\bf k}$ of characteristics $0$ unless specified otherwise.

\section{Background on difference algebras and higher associative algebras}\label{sec2}
In this section, we recall some basics on difference algebras (including their cohomology), associative $2$-algebras and $2$-term $A_\infty$-algebras. We refer the reader to the references \cite{das-hom,das,baez-crans,guo-li,kh} for further details.

\begin{defn}
    (i) Let $A$ be an associative algebra. A linear map $d : A \rightarrow A$ is said to be a {\bf difference operator} (also called a {\bf differential operator of weight 1} or a {\bf crossed homomorphism}) on $A$ if
    \begin{align*}
        d (a b) = d(a) b + a d(b) + d(a) d(b), \text{ for } a, b \in A.
    \end{align*}
    A pair $(A, d)$ consisting of an associative algebra $A$ endowed with a distinguished difference operator $d: A \rightarrow A$ is called a {\bf difference algebra}.

    \medskip

    (ii) Let $(A, d)$ and $(A', d')$  be two difference algebras. Then a {\bf homomorphism} of difference algebras from $(A, d)$ to $(A', d')$ is an associative algebra homomorphism $\varphi : A \rightarrow A'$ satisfying $d' \circ \varphi = \varphi \circ d$.
\end{defn}

Let $(A, d)$ be a difference algebra. Then a {\bf bimodule} over $(A, d)$ is a pair $(M, \Delta)$, where $M$ is an $A$-bimodule (in which both the left and right $A$-actions on $M$ are denoted simply by the juxtaposition) and $\Delta : M \rightarrow M$ is a linear map such that for any $a \in A$, $u \in M$,
\begin{align*}
    \Delta (au) = d(a) u + a \Delta (u) + d(a) \Delta (u) \quad \text{ and } \quad \Delta (ua) = \Delta (u) a + u d(a) + \Delta (u) d(a).
\end{align*}

\begin{prop}
    Let $(A, d)$ be a difference algebra and $M$ be an $A$-bimodule. Then $M$ has a new $A$-bimodule structure with respect to the new left and right $A$-actions defined by
    \begin{align*}
        A \times M \rightarrow M, ~(a, u) \mapsto (a + d(a)) u ~~~ \text{ and } ~~~ M \times A \rightarrow M, ~ (u, a) \mapsto u (a + d(a)), \text{ for } a \in A, u \in M.
    \end{align*}
(We denote this new $A$-bimodule structure on $M$ by the notation $M^d$.)
\end{prop}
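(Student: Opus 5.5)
The plan is to reduce the whole statement to the observation that the map $\phi = \mathrm{id} + d : A \rightarrow A$, $\phi(a) = a + d(a)$, is an algebra endomorphism of $A$. Once we have that, $M^d$ is just the given bimodule $M$ pulled back along $\phi$. The new actions are $a \cdot u = \phi(a) u$ and $u \cdot a = u \phi(a)$, so the bimodule axioms for $M^d$ reduce to those of $M$ together with multiplicativity of $\phi$.

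First step: expand the defining identity of a difference operator.
\begin{align*}
\phi(ab) = ab + d(ab) = ab + d(a) b + a d(b) + d(a) d(b) = (a + d(a))(b + d(b)) = \phi(a)\phi(b),
\end{align*}
for all $a, b \in A$. Linearity of $\phi$ is clear, so $\phi$ is an algebra homomorphism. (No unit is assumed, so nothing further is needed.)

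Second step: verify the three bimodule axioms for $M^d$. For the left associativity condition,
\begin{align*}
(ab)\cdot u = \phi(ab) u = (\phi(a)\phi(b)) u = \phi(a)(\phi(b) u) = a \cdot (b \cdot u).
\end{align*}
This uses the first step and the left module axiom of $M$. The right action is handled symmetrically: $u \cdot (ab) = u\phi(a)\phi(b) = (u \cdot a)\cdot b$. Compatibility of the two actions is
\begin{align*}
(a \cdot u)\cdot b = (\phi(a) u)\phi(b) = \phi(a)(u \phi(b)) = a \cdot (u \cdot b),
\end{align*}
which follows directly from the bimodule compatibility in $M$. Bilinearity of the new actions is immediate from the linearity of $\phi$.

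There is no real obstacle here. The only point requiring care is recognizing that the weight-one Leibniz rule is precisely the statement that $\mathrm{id} + d$ is multiplicative; everything afterwards is a formal consequence of restricting scalars along a homomorphism.
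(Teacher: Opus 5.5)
Your proof is correct and complete. The weight-one Leibniz rule is exactly the statement that $\mathrm{id}+d$ is multiplicative, and $M^d$ is then just $M$ with scalars restricted along this algebra endomorphism; this is the standard argument, and the paper itself records the proposition as background without giving a proof.

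Your remark about units is well placed. The paper's own example $d=-\mathrm{Id}_A$ gives $\mathrm{id}+d=0$, so in a unital setting $M^d$ would fail to be a unital bimodule; the statement holds as written only because unitality is not imposed.
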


Let $(A, d)$ be a difference algebra and $(M, \Delta)$ be a bimodule over it. For each $n \geq 0$, we set
\begin{align*}
    C^n_\mathrm{Diff} ((A, d); (M, \Delta) ) := \begin{cases}
        M & \text{ if } n =0,\\
        \mathrm{Hom}(A, M) & \text{ if } n = 1, \\
        \mathrm{Hom} (A^{\otimes n} , M) \oplus \mathrm{Hom}  (A^{\otimes (n-1)} , M) & \text{ if } n \geq 2.
    \end{cases}
\end{align*}
Then there is a map $\delta_\mathrm{Diff} :  C^n_\mathrm{Diff} ((A, d); (M, \Delta) ) \rightarrow  C^{n+1}_\mathrm{Diff} ((A, d); (M, \Delta) )$ defined by
\begin{align*}
    &\delta_\mathrm{Diff} (u) := \delta_\mathrm{Hoch} (u), \quad \delta_\mathrm{Diff} (g) := (\delta_\mathrm{Hoch} (g) ~ \! , ~ \! - \partial^{d, \Delta} ( g ) ), \\
    & \quad \delta_\mathrm{Diff} (f, \chi) := \big( \delta_\mathrm{Hoch} (f) ~ \! , ~ \! \delta_\mathrm{Hoch}^d (\chi) + (-1)^n ~ \! \partial^{d, \Delta} (f)   \big),
\end{align*}
for $u \in M$, $g \in \mathrm{Hom}(A, M)$ and $(f, \chi) \in \mathrm{Hom} (A^{\otimes n} , M) \oplus \mathrm{Hom}  (A^{\otimes (n-1)} , M)$. Here $\delta_\mathrm{Hoch}$ (resp. $\delta_\mathrm{Hoch}^d$) is the Hochschild coboundary operator of the associative algebra $A$ with coefficients in the $A$-bimodule $M$ (resp. $M^d$), and the map $\partial^{d, \Delta} : \mathrm{Hom} (A^{\otimes n} , M) \rightarrow \mathrm{Hom} (A^{\otimes n} , M)$ is given by
\begin{align*}
    (\partial^{d, \Delta} f) (a_1, \ldots, a_n) = \sum_{p=1}^n \sum_{1 \leq i_1 < \cdots < i_p \leq n} f \big(   a_1, \ldots, d (a_{i_1}), \ldots, d (a_{i_p}), \ldots, a_n  \big) - \Delta ( f (a_1, \ldots, a_n)),
\end{align*}
for $f \in  \mathrm{Hom} (A^{\otimes n} , M) $ and $a_1, \ldots, a_n \in A$. Then it turns out that $\{ C^\bullet_\mathrm{Diff} ((A, d); (M, \Delta) ), \delta_\mathrm{Diff} \}$ is a cochain complex \cite{guo-li}. The corresponding cohomology is said to be the {\bf cohomology of the difference algebra} $(A, d)$ with coefficients in the bimodule $(M, \Delta)$. We denote the cohomology groups by $H^\bullet_\mathrm{Diff} ((A, d); (M, \Delta) ).$

\medskip

\medskip

A {\bf 2-vector space} is a category internal to the category of vector spaces \cite{baez-crans}. Explicitly, a $2$-vector space is a category $C$ with a vector space of objects $C_0$ and a vector space of morphisms $C_1$ such that the source and target maps $s, t: C_1 \rightarrow C_0$, the identity-assigning map $i: C_0 \rightarrow C_1$, and the composition map $\circ: C_1 \times_{C_0} C_1 \rightarrow C_1$ are all linear maps. We denote a $2$-vector space $C$ as above simply by $C= (C_1 \rightrightarrows C_0)$ when all the structure maps are understood. Given a morphism $f \in C_1$, we often consider its {\em arrow part} $\overrightarrow{f}$ defined by
\begin{align*}
    \overrightarrow{f} := f - i_{s (f)}.
\end{align*}
Let $C= (C_1 \rightrightarrows C_0)$ and $C'= (C'_1 \rightrightarrows C'_0)$ be two $2$-vector spaces. Then a {\bf linear functor} from $C$ to $C'$ is given by a pair $(F_0, F_1)$ of linear maps $F_0 : C_0 \rightarrow C_0'$ and $F_1 : C_1 \rightarrow C_1'$ compatible with the structure maps of $C$ and $D$. The collection of all $2$-vector spaces and linear functors between them is a category.

\begin{defn}
    (i) An {\bf associative $2$-algebra} is a triple $(C,  \bullet  , \mathcal{A})$ consisting of a $2$-vector space $C = (C_1 \rightrightarrows C_0)$ equipped with a bilinear functor $\bullet : C \times C \rightarrow C$ and a trilinear natural isomorphism (called the {\em associator})
    \begin{align*}
        \mathcal{A}_{x, y, z}: (x \bullet y) \bullet z \rightarrow x \bullet (y \bullet z)
    \end{align*}
that makes the following diagram commutative
\[
\xymatrix{
 & (( x \bullet y) \bullet z) \bullet t \ar[ld]_{ \mathcal{A}_{x, y, z} \bullet 1} \ar[rd]^{1} & \\
(x \bullet (y \bullet z) ) \bullet t \ar[d]_{ \mathcal{A}_{x, y \bullet z, t}} &  & ( (x \bullet y) \bullet z ) \bullet t \ar[d]^{\mathcal{A}_{x \bullet y, z, t}} \\
x \bullet ( (y \bullet z) \bullet t )   \ar[rd]_{1 \bullet \mathcal{A}_{y, z, t}} &  & (x \bullet y) \bullet (z \bullet t)  \ar[ld]^{\mathcal{A}_{x, y, z \bullet t}} \\
  & x \bullet (y \bullet (z \bullet t)). & 
}
\]

\medskip

(ii) Let $(C, \bullet, \mathcal{A})$ and $(C', \bullet', \mathcal{A}')$ be two associative $2$-algebras. A {\bf homomorphism} of associative $2$-algebras from $C$ to $C'$ is a linear functor $(F_0, F_1) : C \rightarrow C'$ among the underlying $2$-vector spaces equipped with a bilinear natural isomorphism
\begin{align*}
    F_2 (x, y) : F_0 (x) \bullet' F_0 (y) \rightarrow F_0 (x \bullet y)
\end{align*}
which makes the following diagram commutative
\[
\xymatrix{
( F_0 (x) \bullet' F_0 (y) ) \bullet' F_0 (z) \ar[rr]^{\mathcal{A}'_{ F_0 (x), F_0 (y), F_0 (z)}}  \ar[d]_{F_2 (x, y) \bullet' 1}  &  & F_0 (x) \bullet' (F_0 (y) \bullet' F_0 (z))  \ar[d]^{1 \bullet' F_2 (y, z)}  \\
F_0 (x \bullet y) \bullet' F_0 (z) \ar[d]_{F_2 (x \bullet y, z)} & & F_0 (x) \bullet' F_0 (y \bullet z)  \ar[d]^{F_2 (x, y \bullet z)} \\
F_0 ( (x \bullet y) \bullet z) \ar[rr]_{F_1 (\mathcal{A}_{x, y, z})} &  & F_0 (x \bullet (y \bullet z)).
}
\]
We denote a homomorphism as above by the triple $(F_0, F_1, F_2)$. The collection of all associative $2$-algebras and homomorphisms between them is a category \cite{das-hom,kh}. We denote this category by {\sf Ass2}. It is important to remark that associative $2$-algebras are closely related to those $A_\infty$-algebras whose underlying graded vector space is concentrated in arities $0$ and $1$.
\end{defn}

\begin{defn}\label{defi-2term-ainf}
    (i) A {\bf $2$-term $A_\infty$-algebra} is a triple $\mathfrak{a} = (A_1 \xrightarrow{ \delta } A_0, \odot, \mu)$ consisting of a $2$-term complex $A_1 \xrightarrow{ \delta } A_0$ equipped with a bilinear multiplication $\odot : A_i \times A_j \rightarrow A_{i+j}$ (for $0 \leq i+j \leq 1$) and a trilinear operation $\mu : A_0 \times A_0 \times A_0 \rightarrow A_1$ such that for any $x, y, z, t \in A_0$ and $h, k \in A_1$, the following list of identities hold:
\begin{align}
    \delta (x \odot h) =~& x \odot \delta (h), \tag{A1} \label{a1}\\
    \delta (h \odot x) =~& \delta (h) \odot x, \tag{A2} \label{a2}\\
    \delta (h) \odot k =~& h \odot \delta (k), \tag{A3} \label{a3}\\
    x \odot (y \odot z) - (x \odot y) \odot z =~& \delta (\mu (x, y, z)), \tag{A4} \label{a4}\\
    x \odot (y \odot h) - (x \odot y) \odot h =~& \mu (x, y, \delta (h)), \tag{A5} \label{a5}\\
    x \odot (h \odot y) - (x \odot h) \odot y =~& \mu (x, \delta (h), y), \tag{A6} \label{a6}\\
    h \odot (x \odot y) - (h \odot x) \odot y =~& \mu (\delta (h), x, y), \tag{A7} \label{a7}\\
    x \odot \mu (y, z, t) - \mu (x \odot y, z, t) + \mu (x, y \odot z, t) &- \mu (x, y, z \odot t) + \mu (x, y, z) \odot t = 0. \tag{A8} \label{a8}
\end{align}

    (ii) Let  $\mathfrak{a} = (A_1 \xrightarrow{ \delta } A_0, \odot, \mu)$ and  $\mathfrak{a}' = (A'_1 \xrightarrow{ \delta' } A'_0, \odot', \mu')$ be $2$-term $A_\infty$-algebras. Then an {\bf $A_\infty$-homomorphism} from $\mathfrak{a}$ to $\mathfrak{a}'$ is a triple $(\varphi_0, \varphi_1, \varphi_2)$, where $\varphi_0 : A_0 \rightarrow A_0'$ and $\varphi_1 : A_1 \rightarrow A_1'$ are linear maps satisfying $\varphi_0 \circ \delta = \delta' \circ \varphi_1$, and $\varphi_2 : A_0 \times A_0 \rightarrow A_1'$ is a bilinear map such that for all $x, y, z \in A_0$ and $h \in A_1$, the following list of identities hold:
    \begin{align}
         \varphi_0 (x \odot y) - \varphi_0 (x) \odot' \varphi_0 (y) =~& \delta' (\varphi_2 (x, y)), \\
         \varphi_1 (x \odot h) - \varphi_0 (x) \odot' \varphi_1 (h) =~& \varphi_2 (x, \delta (h)), \\
         \varphi_1 (h \odot x) - \varphi_1 (h) \odot' \varphi_0 (x) =~& \varphi_2 (\delta (h), x), \\
         \varphi_1 (\mu (x, y, z)) - \mu' (\varphi_0 (x), \varphi_0 (y), \varphi_0 (z)) =~& \varphi_2 (x, y \odot z) - \varphi_2 (x \odot y, z) \nonumber \\
         &+ \varphi_0 (x) \odot' \varphi_2 (y, z) - \varphi_2 (x, y)\odot' \varphi_0 (z).
    \end{align}
\end{defn}
The collection of all $2$-term $A_\infty$-algebras and $A_\infty$-homomorphisms between them is also a category \cite{keller}. This category is denoted by ${\sf 2TermA_\infty}.$ It has been observed that the categories {\sf Ass2} and ${\sf 2TermA_\infty}$ are equivalent \cite{das-hom,kh}. We will implicitly use this result to prove Theorem \ref{thm-main} in Section \ref{sec4}.

\section{Difference associative 2-algebras}\label{sec3}

In this section, we first introduce difference operators on associative $2$-algebras, and consider the category {\sf DiffAss2} of difference associative $2$-algebras. We also observe that the category {\sf DiffAss2} has a subcategory {\sf SDiffAss2} consisting of all strict difference associative $2$-algebras.

\begin{defn}\label{defi-2-diff}
    Let $(C, \bullet, \mathcal{A})$ be an associative $2$-algebra. A {\bf difference operator} on $C$ is a linear functor $D = (D_0, D_1) : C \rightarrow C$ together with a bilinear natural isomorphism
    \begin{align}\label{defi-D}
        \mathcal{D}_{x, y} : D_0 (x \bullet y) \rightarrow D_0 (x) \bullet y + x \bullet D_0 (y) + D_0 (x) \bullet D_0 (y)
    \end{align}
    making the following diagram commutative:
    \begin{align}\label{2-diff}
    \xymatrix{
     & D_0 ( (x \bullet y) \bullet z) \ar[ld]_{\mathcal{D}_{x \bullet y, z}} \ar[rd]^{D_1 (\mathcal{A}_{x, y, z})} & \\
 \substack{ D_0 (x \bullet y) \bullet z + (x \bullet y) \bullet D_0 (z) \\ + D_0 (x \bullet y) \bullet D_0 (z)} \ar[d]_{ \mathcal{D}_{x, y} \bullet 1 ~+~ 1 ~+~ \mathcal{D}_{x, y} \bullet 1} & &  D_0 (x \bullet (y \bullet z)) \ar[d]^{ \mathcal{D}_{x, y \bullet z} } \\
   \substack{  (D_0 (x) \bullet y) \bullet z + (x \bullet D_0 (y)) \bullet z + (D_0 (x) \bullet D_0 (y)) \bullet z \\
   + (x \bullet y) \bullet D_0 (z) + (D_0 (x) \bullet y) \bullet D_0 (z) \\
   + (x \bullet D_0 (y)) \bullet D_0 (z) + (D_0 (x) \bullet D_0 (y)) \bullet D_0 (z) } \ar[rd]_{ \substack{ \mathcal{A}_{D_0 (x), y, z} +  \mathcal{A}_{x, D_0 (y), z} + \mathcal{A}_{D_0 (x), D_0 (y) , z} \\  +  \mathcal{A}_{x, y, D_0 (z)} +  \mathcal{A}_{D_0 (x), y ,  D_0 (z)}  \\ +  \mathcal{A}_{x, D_0 (y), D_0 (z) } +  \mathcal{A}_{D_0 (x) , D_0 (y), D_0 (z) } } } & &
   \substack{ D_0 (x) \bullet (y \bullet z) + x \bullet D_0 (y \bullet z) \\ + D_0 (x) \bullet D_0 (y \bullet z)} \ar[ld]^{\quad 1 ~+~ 1 \bullet \mathcal{D}_{y, z} ~+~ 1 \bullet \mathcal{D}_{y, z}} \\
     & P &
    }
    \end{align}
    where
    \begin{align*}
    P =~&  D_0 (x) \bullet (y \bullet z) + x \bullet (D_0 (y) \bullet z) + x \bullet (y \bullet D_0 (z)) 
     + x \bullet (D_0 (y) \bullet D_0 (z)) + D_0 (x) \bullet (D_0 (y) \bullet z) \\
     &+ D_0 (x) \bullet (y \bullet D_0 (z)) + D_0 (x) \bullet (D_0 (y) \bullet D_0 (z)). 
     \end{align*}
\end{defn}
\noindent A tuple $((C, \bullet, \mathcal{A}), D, \mathcal{D})$ consisting of an associative $2$-algebra together with a difference operator is called a {\bf difference associative $2$-algebra.} We shall often denote a difference associative $2$-algebra as above by $(C, D, \mathcal{D})$ if the underlying associative $2$-algebra structure on $C$ is clear.

\begin{defn}\label{defi-da2-hom}
    Let $((C, \bullet, \mathcal{A}), D, \mathcal{D})$ and $((C', \bullet', \mathcal{A}'), D', \mathcal{D}')$ be two difference associative $2$-algebras. Then a {\bf homomorphism of difference associative $2$-algebras} from the former one to the latter one is a homomorphism $(F_0, F_1, F_2): (C, \bullet, \mathcal{A}) \rightarrow (C', \bullet', \mathcal{A}')$ of the underlying associative $2$-algebras with a natural linear isomorphism
\begin{align}\label{f3}
F_3 (x) : D_0' (F_0 (x)) \rightarrow F_0 (D_0 (x))
\end{align}
making the following diagram commutative:
    \begin{align}\label{2-diff-homo}
    \xymatrix{
 D_0' ( F_0 (x) \bullet' F_0 (y) )  \ar[d]_{D_1' F_2 (x, y)}  \ar[rrr]^{\mathcal{D}'_{F_0 (x), F_0 (y)}} & & & \substack{ D_0' (F_0 (x)) \bullet' F_0 (y) + F_0 (x) \bullet' D_0' (F_0 (y)) \\ + D_0' (F_0 (x)) \bullet' D_0' (F_0 (y)) } \ar[d]^{ \substack{F_3 (x) \bullet' 1 + 1 \bullet' F_3 (y)  \\ + F_3 (x) \bullet' F_3 (y) } }  \\
  D_0' (F_0 (x \bullet y)) \ar[d]_{F_3 (x \bullet y)}  & & & \substack{ F_0 (D_0 (x)) \bullet' F_0 (y) + F_0 (x) \bullet' F_0 (D_0 (y)) \\ + F_0 (D_0 (x)) \bullet' F_0 (D_0 (y))} \ar[d]^{  \substack{  F_2 (D_0 (x), y) + F_2 (x, D_0 (y)) \\ + F_2 (D_0 (x), D_0 (y))} } \\
  F_0 (D_0 (x \bullet y))  \ar[rrr]_{F_1 (\mathcal{D}_{x, y})} &  & & \substack{ F_0 (D_0 (x) \bullet y) + F_0 (x \bullet D_0 (y)) \\ + F_0 ( D_0 (x) \bullet D_0 (y)).}
    }
    \end{align}
    A homomorphism of difference associative $2$-algebras as above is denoted by the tuple $F = (F_0, F_1, F_2, F_3)$.
\end{defn}

With the above definitions, we have the following result.

\begin{thm}\label{thm-categ-diff2}
    The collection of difference associative $2$-algebras as objects and homomorphisms between them is a category. (We denote this category by {\sf DiffAss2}).
\end{thm}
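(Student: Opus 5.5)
To prove Theorem \ref{thm-categ-diff2} we have to (a) define identities and composition of homomorphisms of difference associative $2$-algebras, (b) check that a composite is again a homomorphism, i.e.\ that diagram (\ref{2-diff-homo}) commutes for it, and (c) verify associativity and the unit laws. We build on the fact, recalled in Section \ref{sec2}, that {\sf Ass2} is a category \cite{das-hom,kh}. There the composite of $(F_0,F_1,F_2)$ with $(G_0,G_1,G_2)$ is $(G_0F_0, G_1F_1, (GF)_2)$ with
\begin{align*}
(GF)_2(x,y) = G_1(F_2(x,y)) \circ G_2(F_0(x),F_0(y)),
\end{align*}
and this composition is associative and unital. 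So only the new datum $F_3$ and the coherence square (\ref{2-diff-homo}) need attention.

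\textbf{Identity and composite.} The identity on $(C,D,\mathcal{D})$ is $(\mathrm{id},\mathrm{id},i,i)$, with $F_2$ and $F_3$ identity morphisms; (\ref{2-diff-homo}) then reduces to $D_1(i)=i$ and $1\circ\mathcal{D}_{x,y}=\mathcal{D}_{x,y}$, so it holds trivially. For $F:(C,D,\mathcal{D})\to(C',D',\mathcal{D}')$ and $G:(C',D',\mathcal{D}')\to(C'',D'',\mathcal{D}'')$ we set
\begin{align*}
(GF)_3(x) := G_1(F_3(x)) \circ G_3(F_0(x)) : D_0''(G_0F_0(x)) \to G_0(D_0'(F_0(x))) \to G_0F_0(D_0(x)).
\end{align*}
This is natural and invertible because $G_1$ is a functor and $F_3$, $G_3$ are natural isomorphisms. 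It is linear because all structure maps of $2$-vector spaces are linear, and composition in a $2$-vector space is linear on composable pairs.

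\textbf{Coherence of the composite (main obstacle).} Commutativity of (\ref{2-diff-homo}) for $GF$ is the real content. We paste diagrams.
\begin{enumerate}[(i)]
\item Start from $D_0''(G_0F_0x\bullet''G_0F_0y)$ and apply the square for $G$ at the objects $F_0x,F_0y$. This converts $\mathcal{D}''$ into $G_1(\mathcal{D}'_{F_0x,F_0y})$, at the cost of the $G_2$ and $G_3$ terms.
\item Apply $G_1$ to the whole square for $F$. Since $G_1$ is a linear functor, it preserves composites and sums, so we obtain a commuting square that turns $G_1(\mathcal{D}'_{F_0x,F_0y})$ into $G_1F_1(\mathcal{D}_{x,y})$.
\item Glue the two squares with naturality squares. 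Naturality of $G_3$ with respect to $F_2(x,y)$ lets us commute $D_1''G_1(F_2(x,y))$ past $G_3$. Naturality of $G_2$ with respect to $F_3(x)$, $F_3(y)$ and the pairs appearing in the three summands lets us move $G_2(\cdots)$ past $G_1(F_3)\bullet''\cdots$. Finally, functoriality of $D_1''$ and bilinearity of $\bullet''$ give the interchange law $(a\circ b)\bullet(c\circ d)=(a\bullet c)\circ(b\bullet d)$, which splits each of the three summands $F_3(x)\bullet 1$, $1\bullet F_3(y)$, $F_3(x)\bullet F_3(y)$ appropriately.
\end{enumerate}
The main care is bookkeeping of the three summands. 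Each must be handled separately with the corresponding naturality square, and they are then added using the linearity of composition in $C''_1$. Once this is done, the outer boundary is exactly (\ref{2-diff-homo}) for $GF$ with $(GF)_2$ and $(GF)_3$.

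\textbf{Associativity and unit laws.} Given $H,G,F$, the first three components compose associatively as in {\sf Ass2}. For the third component, both bracketings of the composite give
\begin{align*}
H_1G_1(F_3(x)) \circ H_1(G_3(F_0x)) \circ H_3(G_0F_0x),
\end{align*}
using functoriality of $H_1$. Composing with identities leaves $F_3$ unchanged. Hence {\sf DiffAss2} is a category.
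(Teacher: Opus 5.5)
Your proposal is correct and takes the same approach as the paper. You use the same composite $(GF)_2(x,y)=G_1(F_2(x,y))\circ G_2(F_0x,F_0y)$ and $(GF)_3(x)=G_1(F_3(x))\circ G_3(F_0x)$, and the same identity homomorphism built from identity natural transformations. The paper only says the remaining checks are easy, whereas you sketch the actual verification (naturality of $G_3$ along $F_2$, $G_1$ applied to the square for $F$, naturality of $G_2$ along $F_3$ strand by strand, the interchange law, and associativity/unit laws for the new component), and that sketch is sound.
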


\begin{proof}
    Here, we only define the composition of homomorphisms and describe identity homomorphisms. Let $F = (F_0, F_1, F_2, F_3) : (C, D, \mathcal{D}) \rightarrow (C', D', \mathcal{D}')$ and $G = (G_0, G_1, G_2, G_3) : (C', D', \mathcal{D}') \rightarrow (C'', D'', \mathcal{D}'')$ be two homomorphisms between difference associative $2$-algebras. Then their composition 
    \begin{align*}
    G \circ F : (C, D, \mathcal{D}) \rightarrow (C', D', \mathcal{D}'')
    \end{align*}
    is defined to be the usual composition of the underlying linear functors (i.e. $(G \circ F)_0 = G_0 \circ F_0$ and $(G \circ F)_1 = G_1 \circ F_1$), and by letting $(G \circ F)_2$ and $(G \circ F)_3$ be the following compositions
    \begin{center}
        $(G \circ F)_2 (x, y) : (G \circ F)_0 (x) \bullet'' (G \circ F)_0 (y) \xrightarrow{ G_2 (F_0 (x), F_0 (y))} G_0 ( F_0 (x) \bullet'F_0 (y) ) \xrightarrow{ G_1 (F_2 (x, y) )} (G \circ F)_0 (x \bullet y),$ 
        \end{center}
        \begin{center}
       $ (G \circ F)_3 (x) : D_0'' ( (G \circ F)_0 (x)) \xrightarrow{ G_3 (F_0 (x))} G_0 (D_0' (F_0 (x))) \xrightarrow{ G_1 (F_3 (x))} (G \circ F)_0 (D_0 (x)).$
    \end{center}
    On the other hand, for any difference associative $2$-algebra $(C, D, \mathcal{D})$, we define the identity homomorphism $1_{ (C, D, \mathcal{D})} : (C, D, \mathcal{D}) \rightarrow (C, D, \mathcal{D})$ by taking the identity linear functor as its underlying functor, together with the identity natural transformations  $(1_{ (C, D, \mathcal{D})} )_2$ and  $(1_{ (C, D, \mathcal{D})} )_3$. With these structures, it is easy to see that {\sf DiffAss2} is a category.
\end{proof}

Let $( C, D, \mathcal{D})$ be a difference associative $2$-algebra. It is said to be {\bf strict} if
\begin{itemize}
    \item[(i)] the underlying associative $2$-algebra $C = (C, \bullet, \mathcal{A})$ is {\em strict} in the sense that the associator $\mathcal{A}$ is the identity isomorphism,
    \item[(ii)] $\mathcal{D}$ is also identity isomorphism.
\end{itemize}
The collection of all strict difference associative $2$-algebras and homomorphisms of difference associative $2$-algebras is a category, denoted by {\sf SDiffAss2}. It is easy to see that {\sf SDiffAss2} is a subcategory of {\sf DiffAss2}.

\section{Difference A$_\infty$-algebras}\label{sec4}

In this section, we first recall the Maurer-Cartan characterization of difference operators on a given associative algebra. Subsequently, we generalize this construction to obtain the definition of a difference operator on an $A_\infty$-algebra. An $A_\infty$-algebra with a distinguished difference operator is called a difference $A_\infty$-algebra. We explicitly describe $2$-term difference $A_\infty$-algebras, and show that the category of $2$-term difference $A_\infty$-algebras is equivalent to the category {\sf DiffAss2} of difference associative $2$-algebras considered in the previous section.

\medskip

We shall begin with $L_\infty$-algebras and their Maurer-Cartan elements \cite{lada-markl,lada-s,getzler}. Note that $L_\infty$-algebras considered below are different from the traditional definition \cite{lada-markl}. However, one is related to the other by a degree shift.

\begin{defn}
    An {\bf $L_\infty$-algebra} is a pair $(\mathfrak{L}, \{ l_k \}_{k=1}^\infty)$ of a graded vector space $\mathfrak{L} = \oplus_{i \in \mathbb{Z}} \mathfrak{L}_i$ together with a collection $\{ l_k : \mathfrak{L}^{\otimes k} \rightarrow \mathfrak{L} \}_{k=1}^\infty$ of degree $-1$ graded symmetric linear maps satisfying 
    \begin{align*}
        \sum_{i+j = n+1} \sum_{\sigma \in \mathbb{S}_{(i, n-i)}} \epsilon (\sigma) ~ \! l_j \big(  l_i ( x_{\sigma (1)}, \ldots, x_{\sigma (i)}) , x_{\sigma (i+1)}, \ldots, x_{\sigma (n)} \big) = 0,
    \end{align*}
    for each $n \geq 1$ and homogeneous elements $x_1, \ldots, x_n \in \mathfrak{L}$. Here $\epsilon (\sigma) = \epsilon (\sigma; x_1, \ldots, x_n)$ is the {\em Koszul sign} that appears in the graded context while interchanging the positions of graded objects.
\end{defn}

All $L_\infty$-algebras considered in this paper are {\em weakly filtered} in the sense of \cite{getzler}. This, in turn, implies that the infinite sum considered in the following definition is always convergent.

\begin{defn}
    Let $(\mathfrak{L}, \{ l_k \}_{k=1}^\infty)$ be an $L_\infty$-algebra. Then an element $\alpha \in \mathfrak{L}_0$ is said to be a {\bf Maurer-Cartan element} of the $L_\infty$-algebra if $\alpha$ satisfies
    \begin{align*}
        \sum_{k=1}^\infty \frac{1}{k! } ~ \! l_k (\alpha, \ldots, \alpha) = 0.
    \end{align*}
\end{defn}

Voronov \cite{voro} gives a useful construction of an $L_\infty$-algebra. First, we recall that a {\em $V$-data} is a quadruple $(\mathcal{G}, \mathcal{H}, P, \pi)$ consisting of a graded Lie algebra $\mathcal{G}$ (with the graded Lie bracket $[~, ~]$), an abelian graded Lie subalgebra $\mathcal{H} \subset \mathcal{G}$, a projection map $P : \mathcal{G} \rightarrow \mathcal{H}$ whose kernel $\mathrm{ker} (P) \subset \mathcal{G}$ is a graded Lie subalgebra, and an element $\pi \in \mathrm{ker} (P)_{- 1}$ that satisfies $[\pi, \pi] = 0$. With this notation, Voronov's construction is given by the following.

\begin{thm}\label{thm-voro}
    Let $(\mathcal{G}, \mathcal{H}, P, \pi)$ be a $V$-data. Then the pair $(\mathcal{H}, \{ l_k \}_{k=1}^\infty)$ is an $L_\infty$-algebra, where 
    \begin{align*}
        l_k (a_1, \ldots, a_k) := P [ \cdots [[ \pi, a_1], a_2], \ldots, a_k],
    \end{align*}
    for all $k \geq 1$ and homogeneous elements $a_1, \ldots, a_k \in \mathcal{H}$.
\end{thm}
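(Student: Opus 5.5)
Voronov's theorem is asserted to hold for any $V$-data, so I would prove it directly from the structure of $(\mathcal{G}, \mathcal{H}, P, \pi)$ rather than through coalgebras. The plan is to encode all the brackets $l_k$ in a single generating object and reduce the $L_\infty$ relations to $[\pi,\pi]=0$ together with the subalgebra properties.

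\emph{Step 1 (unary identity).} Since $\mathcal{H}$ is abelian, the iterated bracket $[\cdots[[\pi,a_1],a_2],\ldots,a_k]$ is graded symmetric in the $a_i$. This follows from the graded Jacobi identity: swapping $a_i,a_{i+1}$ changes the bracket by a term containing $[a_i,a_{i+1}]=0$. Each $l_k$ has degree $-1$ because $\pi$ has degree $-1$.

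\emph{Step 2 (generating function).} I would work formally, for example after tensoring with a graded commutative parameter algebra so that a degree-$0$ element $a\in\mathcal{H}$ suffices by polarization. Set
\[
\Pi_a := e^{-\mathrm{ad}_a}\pi=\sum_{k\ge 0}\frac{1}{k!}[\cdots[\pi,a],\ldots,a].
\]
Then $\sum_k \frac1{k!}l_k(a,\ldots,a)=P(\Pi_a)$. Because $e^{-\mathrm{ad}_a}$ is a Lie algebra automorphism, $[\Pi_a,\Pi_a]=e^{-\mathrm{ad}_a}[\pi,\pi]=0$.

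\emph{Step 3 (the key step).} Decompose $\Pi_a = P\Pi_a + (1-P)\Pi_a$, and write $\ker P$ for the image of $1-P$. Expanding $[\Pi_a,\Pi_a]=0$ and applying $P$ gives
\[
P[P\Pi_a,P\Pi_a]+2P[P\Pi_a,(1-P)\Pi_a]+P[(1-P)\Pi_a,(1-P)\Pi_a]=0.
\]
The first term vanishes since $\mathcal{H}$ is abelian. The last term vanishes since $\ker P$ is a subalgebra. Hence $P[(1-P)\Pi_a, P\Pi_a]=0$.

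Now I would identify the $L_\infty$ relation. The generating form of the relations is
\[
\sum_{j}\frac{1}{j!}\,l_{j+1}\Big(\sum_i \tfrac1{i!}l_i(a,\ldots,a),\,a,\ldots,a\Big)=0,
\]
which one checks equals $P[\Pi_a, P\Pi_a]$ up to sign, using $P[\cdots[[\pi,b],a],\ldots,a]$ with $b=P\Pi_a$ and the automorphism property. This gives $P[P\Pi_a+(1-P)\Pi_a,P\Pi_a]=P[(1-P)\Pi_a,P\Pi_a]$, again because $\mathcal{H}$ is abelian, and this vanishes by the previous paragraph.

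\emph{Step 4 (polarization).} Polarizing in $a$, or equivalently using parameters, recovers the multilinear identities with Koszul signs $\epsilon(\sigma)$. The relevant combinatorics: the number of $(i,n-i)$-unshuffles corresponds to the coefficients $\frac{1}{i!\,j!}$.

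The main obstacle is the bookkeeping in Step 3. One must show precisely that $l_{j+1}(b,a,\ldots,a)=P[e^{-\mathrm{ad}_a}[\pi,b]]$-type expressions reassemble into $P[\Pi_a,b]$, which needs $[b,a]=0$ for $b\in\mathcal{H}$. One must also get the signs right in the graded, parametrized setting; I expect a sign error to be the likeliest failure point and would check the $n=1,2$ cases explicitly first.
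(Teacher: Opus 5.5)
The paper gives no proof of this statement; it is quoted from Voronov \cite{voro}. Your argument is correct and is in substance Voronov's own proof. Its ingredients are the generating function $P(e^{-\mathrm{ad}_a}\pi)$, the fact that $e^{-\mathrm{ad}_a}$ is a Lie automorphism, and your Step 3. Step 3 is the identity $P[x,y]=P[Px,y]+P[x,Py]$, which holds for all $x,y\in\mathcal{G}$ because $\mathcal{H}$ is abelian and $\ker P$ is a subalgebra, applied with $x=y=\Pi_a$.

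The bookkeeping you flag closes exactly, with no sign. For $b=P\Pi_a$ one has $\sum_{m\ge 0}\frac{1}{m!}\,l_{m+1}(b,a,\ldots,a)=P\,e^{-\mathrm{ad}_a}[\pi,b]=P[\Pi_a,e^{-\mathrm{ad}_a}b]=P[\Pi_a,b]$, since $[a,b]=0$ in $\mathcal{H}$.

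Make two points explicit when writing it up:

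\begin{enumerate}
\item You silently use $\pi\in\ker P$ to drop the $k=0$ term $P\pi$ from $P\Pi_a$. Without it, the same computation gives only a curved $L_\infty$-algebra with $l_0=P\pi$.
\item For $e^{-\mathrm{ad}_a}$ to be a finite sum, and for polarization to recover the Koszul signs $\epsilon(\sigma)$, take $a=\sum_k t_k x_k$ with $|t_k|=-|x_k|$ and all $t_k^2=0$. Then $\mathrm{ad}_a$ is nilpotent, and you extract the coefficient of $t_1\cdots t_n$.
\end{enumerate}
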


Let $A$ be an associative algebra, and $\overline{A}$ be another copy of the vector space $A$. Given any $a \in A$, we denote the corresponding element in $\overline{A}$ simply by $\overline{a}$. Thus, $a$ and $\overline{a}$ represent the same element. Then it is easy to see that the vector space $ \overline{A} \oplus A$ carries a new associative algebra structure with the multiplication given by 
\begin{align*}
     (\overline{a}, x) \ast (\overline{b}, y) = ( \overline{a b}, a y + x b + xy),
\end{align*}
for $(\overline{a}, x) ,  (\overline{b}, y) \in \overline{A} \oplus A$. We note that a linear map $d : A \rightarrow A$ is a difference operator on $A$ if and only if the graph of the corresponding map (also denoted by the same notation) $d: \overline{A} \rightarrow A$, $\overline{a} \mapsto d(a)$, i.e., 
\begin{align*}
    \mathrm{Graph} ~ \! (d) := \{ ( \overline{a}, d(a)) ~ \! | ~ \! \overline{a} \in \overline{A} \}
\end{align*}
is a subalgebra of the associative algebra $(  \overline{A} \oplus A, \ast )$ considered above. To obtain another characterization of difference operators, we first consider Gerstenhaber's graded Lie algebra
\begin{align*}
    \mathcal{G} = \big( \oplus_{n= - \infty}^0 \mathrm{Hom} ( (\overline{A} \oplus A)^{\otimes (1-n)}, \overline{A} \oplus A ) , [~, ~] \big)
\end{align*}
on the space of all multilinear maps on the vector space $\overline{A} \oplus A$. It is easy to see that the graded space $\mathcal{H} =  \oplus_{n= - \infty}^0 \mathrm{Hom} ( \overline{A}^{\otimes (1-n)}, A )$ is an abelian graded Lie subalgebra of $\mathcal{G}$. Let $ P: \mathcal{G} \rightarrow \mathcal{H}$ be the projection map onto the subspace $\mathcal{H}$. Then $\mathrm{ker} (P)$ is also a graded Lie subalgebra of $\mathcal{G}$. On the other hand, since $(\overline{A} \oplus A, \ast)$ is an associative algebra structure on the vector space $\overline{A} \oplus A$, the multiplication $\ast$ corresponds to an element $\pi \in \mathrm{Hom} ( (\overline{A} \oplus A)^{\otimes 2}, \overline{A} \oplus A )$ such that 
\begin{align*}
    \pi ( (\overline{a}, x), (\overline{b}, y) ) =  (\overline{a}, x) \ast (\overline{b}, y), \text{ for all } (\overline{a}, x), (\overline{b}, y) \in \overline{A} \oplus A.
\end{align*}
Then $\pi \in \mathrm{ker}(P)_{-1}$ and satisfies $[\pi, \pi] = 0$. Combining all these, we obtain a $V$-data $(\mathcal{G}, \mathcal{H}, P, \pi)$. Hence, by applying Theorem \ref{thm-voro}, we get the following \cite{das,das-ram}.

\begin{thm}\label{mc-gla}
   Let $A$ be an associative algebra. Then $ (\mathcal{H} =  \oplus_{n= - \infty}^0 \mathrm{Hom} ( \overline{A}^{\otimes (1-n)}, A ) , \{ l_k \}_{k=1}^\infty)$ is an $L_\infty$-algebra, where
    \begin{align*}
        l_1 (f_1) := P [\pi, f_1], \qquad
        l_2 (f_1, f_2) := P [[ \pi, f_1], f_2] ~ \quad \text{and} \quad
        l_k =0, \text{ for } k \geq 3.
    \end{align*}
    Moreover, a linear map $d: A \rightarrow A$ is a difference operator on $A$ if and only if the corresponding map $d: \overline{A} \rightarrow A$ (considered as an element of $\mathcal{H}_0 = \mathrm{Hom} (\overline{A}, A)$) is a Maurer-Cartan element of the $L_\infty$-algebra $ (\mathcal{H} , \{ l_k \}_{k=1}^\infty)$.
\end{thm}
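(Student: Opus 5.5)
The $L_\infty$-structure itself comes directly from Theorem \ref{thm-voro} applied to the $V$-data $(\mathcal{G}, \mathcal{H}, P, \pi)$ constructed above. So the plan is first to verify the $V$-data axioms that were only asserted. Then I will show that the higher brackets $l_k$ vanish for $k \geq 3$. Finally, I will compute the Maurer-Cartan equation explicitly for $d \in \mathcal{H}_0$.

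For the $V$-data, three facts need checking.
\begin{itemize}
\item $\mathcal{H}$ is abelian. Elements of $\mathcal{H}$ take inputs only in $\overline{A}$ and output in $A$, so any Gerstenhaber partial composition $f \circ_i g$ with $f, g \in \mathcal{H}$ would feed an element of $A$ into an $\overline{A}$-slot. Hence it is zero.
\item $\mathrm{ker}(P)$ is closed under the bracket. I will decompose $\mathrm{Hom}((\overline{A}\oplus A)^{\otimes n}, \overline{A}\oplus A)$ by counting how many inputs lie in $A$ and whether the output lies in $A$. Then $\mathrm{ker}(P)$ is spanned by the components with either some input in $A$ or output in $\overline{A}$. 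A composition of two such components can never land in the "all inputs in $\overline{A}$, output in $A$" component. This is a short bookkeeping check of the bidegree of each component under composition.
\item $\pi$ lies in $\mathrm{ker}(P)_{-1}$ and $[\pi,\pi]=0$. We have $P\pi = 0$ because the $A$-component of $\pi$ restricted to $\overline{A}\otimes\overline{A}$ is zero; the product $\overline{a}\ast\overline{b} = (\overline{ab},0)$. And $[\pi,\pi]=0$ is equivalent to associativity of $\ast$, which is a routine expansion.
\end{itemize}

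For the vanishing of higher brackets, I will use a degree-counting argument. Each bracket with an element of $\mathcal{H}$ replaces one $\overline{A}$-input by an $A$-valued map, or post-composes into an $A$-input. Since $\pi$ is bilinear and each of its components has at most two $A$-inputs, we get $[[[\pi, f_1], f_2], f_3]$ restricted to all-$\overline{A}$ inputs as follows: we must plug three $A$-valued maps into the slots of $\pi$, or into the slots of something already containing an $A$-output. The components of $\pi$ with $A$-output have at most two $A$-inputs (namely $xy$), so after three insertions $P$ of the result is zero. I expect this step to be the main obstacle: tracking which components survive $P$ requires care with the inner versus outer compositions of the Gerstenhaber bracket.

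Finally, for $d \in \mathcal{H}_0 = \mathrm{Hom}(\overline{A}, A)$ I will compute both terms of the Maurer-Cartan equation:
\begin{align*}
l_1(d)(\overline{a},\overline{b}) &= \pm\big(d(a)b + a d(b) - d(ab)\big),\\
\tfrac{1}{2} l_2(d,d)(\overline{a},\overline{b}) &= \pm\, d(a)d(b).
\end{align*}
The first comes from the $ay + xb$ part of $\ast$ and the term $d\circ\pi$. The second comes from the $xy$ part, appearing twice and compensated by the factor $\tfrac12$. Fixing the sign conventions of the Gerstenhaber bracket so that both terms carry the same sign, the equation $l_1(d) + \tfrac12 l_2(d,d) = 0$ becomes exactly the identity (\ref{diffe-iden}). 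Equivalently, it says that $\mathrm{Graph}(d)$ is a subalgebra of $(\overline{A}\oplus A,\ast)$, which gives the claimed characterization.
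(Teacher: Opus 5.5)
Your proposal is correct and follows the paper's route. The paper only assembles the $V$-data, applies Theorem~\ref{thm-voro}, and defers the vanishing of $l_{k\geq 3}$ and the Maurer--Cartan computation to \cite{das,das-ram}; recasting your bidegree bookkeeping for $\ker(P)$ as an additive weight ($\mathcal{H}$ of weight $1$, $\ker(P)$ of weight $\leq 0$, $\pi$ of weights $0$ and $-1$) shows directly that $[[[\pi,f_1],f_2],f_3]=0$. One caution: the relative sign is fixed by the convention, not chosen, since with the standard bracket $[f,g]=f\diamond g-(-1)^{|f||g|}g\diamond f$ (as in the paper's $\llbracket~,~\rrbracket$) you get $l_1(d)(\overline{a},\overline{b})=d(a)b+ad(b)-d(ab)$ and $\tfrac12 l_2(d,d)(\overline{a},\overline{b})=d(a)d(b)$, both with a plus sign, whereas reversing the bracket's sign would make the Maurer--Cartan elements exactly the negatives of difference operators.
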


The $L_\infty$-algebra obtained in the above theorem is indeed a differential graded Lie algebra (after a degree shift), as only the maps $l_1$ and $l_2$ are nontrivial. In what follows, we generalise the above characterization to the homotopy context. First, recall that an {\bf $A_\infty$-algebra} is a pair $(\mathfrak{A}, \{ \mu_k \}_{k=1}^\infty)$ consisting of a graded vector space $\mathfrak{A} = \oplus_{i \in \mathbb{Z}} \mathfrak{A}_i$ together with a collection $\{ \mu_k : \mathfrak{A}^{\otimes k} \rightarrow \mathfrak{A} \}_{k=1}^\infty$ of degree $-1$ graded linear maps satisfying 
    \begin{align*}
        \sum_{k+l = n+1} \sum_{i=1}^{n-l+1} (-1)^{ |a_1| + \cdots + |a_{i-1}|} ~ \! \mu_k \big(  a_1, \ldots, a_{i-1} , \mu_l (a_i, \ldots, a_{i+l-1}), a_{i+l} , \ldots, a_n \big) =0,
    \end{align*}
    for each $n \geq 1$ and homogeneous elements $a_1, \ldots, a_n \in \mathfrak{A}$. This definition is also different from the traditional definition of an $A_\infty$-algebra \cite{keller,lada-markl}; however, one is related to the other by a degree shift.

    \medskip

Let $\mathfrak{A} = \oplus_{i \in \mathbb{Z}} \mathfrak{A}_i$ be a graded vector space (not necessarily having any additional structure). Consider the free reduced tensor algebra $\overline{T} (\mathfrak{A}) = \oplus_{n=1}^\infty \mathfrak{A}^{\otimes n}$ over the graded vector space $\mathfrak{A}$. For each $n \in \mathbb{Z}$, let $C^n (\mathfrak{A}, \mathfrak{A}) := \mathrm{Hom}^n ( \overline{T} (\mathfrak{A}), \mathfrak{A})$ be the space of all degree $n$ graded linear maps from the graded vector space $\overline{T} (\mathfrak{A})$ to $\mathfrak{A}$. Hence, an element $\mu \in C^n (\mathfrak{A}, \mathfrak{A})$ is given by a formal sum $\mu = \sum_{k=1}^\infty \mu_k$, where $\mu_k : \mathfrak{A}^{\otimes k} \rightarrow \mathfrak{A}$ is a degree $n$ graded linear map. Given any $\mu = \sum_{k=1}^\infty \mu_k \in C^n (\mathfrak{A}, \mathfrak{A})$ and $\nu = \sum_{l=1}^\infty \nu_l \in C^m (\mathfrak{A}, \mathfrak{A})$, we define a bracket $\llbracket \mu, \nu \rrbracket \in C^{m+n} (\mathfrak{A}, \mathfrak{A})$ by
\begin{align*}
    \llbracket \mu, \nu \rrbracket = \sum_{p =1}^\infty \big(  \sum_{k+l = p+1} \mu_k \diamond \nu_l - (-1)^{mn} ~ \! \nu_l \diamond \mu_k    \big),
\end{align*}
where
\begin{align*}
    (\mu_k \diamond \nu_l) (a_1, \ldots, a_p) := \sum_{i=1}^{p-l+1} (-1)^{|a_1| + \cdots + |a_{i-1}|} ~\! \mu_k \big( a_1, \ldots, a_{i-1}, \nu_l (a_i, \ldots, a_{i+l-1}), a_{i+l}, \ldots, a_p \big).
\end{align*}
Then it turns out that the graded vector space $\oplus_{n \in \mathbb{Z}} C^n (\mathfrak{A}, \mathfrak{A})$ with the above-defined bracket is a graded Lie algebra. Moreover, an element $\mu = \sum_{k=1}^\infty \mu_k \in C^{-1} (\mathfrak{A}, \mathfrak{A})$ is a Maurer-Cartan element of the graded Lie algebra $\big(   \oplus_{n \in \mathbb{Z}} C^n (\mathfrak{A}, \mathfrak{A}), [~, ~]  \big)$, i.e., $[\mu, \mu] = 0$ if and only if $(\mathfrak{A}, \{ \mu_k \}_{k=1}^\infty)$ is an $A_\infty$-algebra.

\medskip

 Let $(\mathfrak{A}, \{ \mu_k \}_{k=1}^\infty)$ be an $A_\infty$-algebra. As before, let $\overline{\mathfrak{A}}$ be another copy of $\mathfrak{A}$. Then we endow $\overline{\mathfrak{A}} \oplus \mathfrak{A}$ an $A_\infty$-algebra structure whose structure maps $ \{  \widetilde{\mu_k} : (\overline{\mathfrak{A}} \oplus \mathfrak{A})^{\otimes k} \rightarrow \overline{\mathfrak{A}} \oplus \mathfrak{A} \}_{k=1}^\infty$ are given by 
\begin{align}\label{semid-inf}
     \widetilde{\mu_k}  (( \overline{a_1}, x_1), \ldots, ( \overline{a_k}, x_k) ) = \big( ~ \!  \overline{\mu_k (a_1, \ldots, a_k)} ~ \! , ~ \! \sum_{p=1}^k \sum_{1 \leq i_1 < \cdots < i_p \leq k} \mu_k ( a_1, \ldots, x_{i_1}, \ldots, x_{i_p}, \ldots, a_k ) \big),
\end{align}
for any $k \geq 1$ and homogeneous elements $( \overline{a_1}, x_1), \ldots, ( \overline{a_k}, x_k) \in \overline{\mathfrak{A}} \oplus \mathfrak{A}$. We now consider the graded Lie algebra 
\begin{align*}
    \mathcal{G} = \big(    \oplus_{n \in \mathbb{Z}} C^n ( \overline{\mathfrak{A}} \oplus \mathfrak{A}, \overline{\mathfrak{A}} \oplus \mathfrak{A}   ), [~, ~] \big)
\end{align*}
associated to the graded vector space $\overline{\mathfrak{A}} \oplus \mathfrak{A}$. Then it is easy to see that $\mathcal{H} =  \oplus_{n \in \mathbb{Z}} C^n ( \overline{\mathfrak{A}} ,  \mathfrak{A}   ) $ is an abelian graded Lie subalgebra of $\mathcal{G}$. Let $P: \mathcal{G} \rightarrow \mathcal{H}$ be the projection onto the graded subspace $\mathcal{H}$. Then $\mathrm{ker} (P)$ is a graded Lie subalgebra of $\mathcal{G}$. Finally, we observe that the $A_\infty$-algebra structure $( \overline{\mathfrak{A}} \oplus \mathfrak{A}, \{ \widetilde{\mu_k} \}_{k=1}^\infty)$ considered in (\ref{semid-inf}) gives rise to a Maurer-Cartan element $\pi = \sum_{k=1}^\infty \widetilde{\mu_k} \in  C^{-1} ( \overline{\mathfrak{A}} \oplus \mathfrak{A}, \overline{\mathfrak{A}} \oplus \mathfrak{A} )$ of the graded Lie algebra $\mathcal{G}$. Additionally, we also have $\pi \in \mathrm{ker} (P)_{-1}$. Hence, we obtain a $V$-data $(\mathcal{G}, \mathcal{H}, P, \pi)$.

\begin{thm}\label{l-inf-imp}
    Let $(\mathfrak{A}, \{ \mu_k \}_{k=1}^\infty)$ be an $A_\infty$-algebra. Then there is an $L_\infty$-algebra
    \begin{align*}
        \big( \mathcal{H} = \oplus_{n \in \mathbb{Z}} C^n ( \overline{\mathfrak{A}} ,  \mathfrak{A}   ), \{ l_k \}_{k=1}^\infty  \big)
    \end{align*}
    with the structure maps given by
\begin{align*}
    l_k (f_1, \ldots, f_k) := P [ \cdots [[ \pi, f_1], f_2], \ldots, f_k ],
\end{align*}
for any $k \geq 1$ and homogeneous elements $f_1, \ldots, f_k \in \mathcal{H}$.
\end{thm}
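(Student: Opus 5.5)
The statement follows from Voronov's construction (Theorem \ref{thm-voro}) once we check that $(\mathcal{G}, \mathcal{H}, P, \pi)$, as set up just before the statement, is a genuine $V$-data. So the plan is to verify the four axioms one by one and then read off the $L_\infty$-structure maps.

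\emph{Step 1: the graded Lie algebra and the bracket on $\mathcal{H}$.}
First, $\mathcal{G}$ is a graded Lie algebra. This is the general fact, recalled above, that $\oplus_n C^n(V,V)$ with $\llbracket~,~\rrbracket$ is a graded Lie algebra for any graded space $V$; here we take $V = \overline{\mathfrak{A}} \oplus \mathfrak{A}$.

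Next, $\mathcal{H}$ is abelian. An element $f \in C^n(\overline{\mathfrak{A}}, \mathfrak{A})$ is extended to $\overline{T}(\overline{\mathfrak{A}} \oplus \mathfrak{A})$ by zero on every tensor word containing a factor from $\mathfrak{A}$, and it takes values in $\mathfrak{A}$. For $f, g \in \mathcal{H}$, each term $f_k \diamond g_l$ inserts an output of $g_l$ (which lies in $\mathfrak{A}$) into an input slot of $f_k$. Such a term vanishes, so every composition is zero and $\llbracket f, g \rrbracket = 0$.

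\emph{Step 2: $\ker(P)$ is a subalgebra.}
Decompose multilinear maps on $\overline{\mathfrak{A}} \oplus \mathfrak{A}$ by the number of $\mathfrak{A}$-inputs and by the output component. Then $\ker(P)$ consists of the components that either have at least one $\mathfrak{A}$-input, or have only $\overline{\mathfrak{A}}$-inputs and output in $\overline{\mathfrak{A}}$.

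Consider a composition $\mu \diamond \nu$ with $\mu, \nu \in \ker(P)$, and suppose it produced a nonzero component with all inputs in $\overline{\mathfrak{A}}$ and output in $\mathfrak{A}$. Then $\nu$ receives only $\overline{\mathfrak{A}}$-inputs, so $\nu$ must output in $\overline{\mathfrak{A}}$. Consequently $\mu$ also receives only $\overline{\mathfrak{A}}$-inputs, so $\mu$ must output in $\overline{\mathfrak{A}}$. This contradicts the output lying in $\mathfrak{A}$. Hence $\ker(P)$ is closed under the bracket.

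\emph{Step 3: $\pi$ is Maurer--Cartan and lies in $\ker(P)_{-1}$.}
From \eqref{semid-inf}, each $\widetilde{\mu_k}$ with all inputs in $\overline{\mathfrak{A}}$ has output $\overline{\mu_k(a_1,\ldots,a_k)} \in \overline{\mathfrak{A}}$, so $P\pi = 0$. Also $\pi$ has degree $-1$.

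The main obstacle is showing $\llbracket \pi, \pi \rrbracket = 0$, i.e.\ that the $\widetilde{\mu_k}$ satisfy the $A_\infty$-relations. I would avoid a direct sign-heavy expansion with nested insertions. Instead, I would realize $\overline{\mathfrak{A}} \oplus \mathfrak{A} \cong \mathfrak{A} \oplus \mathfrak{A}$ via $(\overline{a}, x) \mapsto (a, a + x)$. Under this map, the formula $\sum_{p \geq 1} \mu_k(\ldots x_{i_1} \ldots x_{i_p} \ldots)$ becomes $\mu_k(a_1 + x_1, \ldots, a_k + x_k) - \mu_k(a_1, \ldots, a_k)$. So the transported structure is $(a_i, y_i) \mapsto \big(\mu_k(a_1, \ldots, a_k), \mu_k(y_1, \ldots, y_k)\big)$, the direct product $A_\infty$-algebra $\mathfrak{A} \times \mathfrak{A}$. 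The $A_\infty$-relations hold there componentwise, and the isomorphism is a linear degree-$0$ map, so it preserves them. Hence $\llbracket \pi, \pi \rrbracket = 0$.

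With the $V$-data established, Theorem \ref{thm-voro} gives the $L_\infty$-algebra $(\mathcal{H}, \{l_k\})$ with $l_k(f_1, \ldots, f_k) = P[\cdots[[\pi, f_1], f_2], \ldots, f_k]$, which is exactly the claim.
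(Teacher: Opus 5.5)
Your proposal is correct and follows the same route as the paper. The paper sets up the $V$-data $(\mathcal{G},\mathcal{H},P,\pi)$ in the paragraph before the statement, asserting that $\mathcal{H}$ is abelian, that $\ker(P)$ is a subalgebra, and that (\ref{semid-inf}) is an $A_\infty$-structure with $\pi\in\ker(P)_{-1}$, and then simply invokes Theorem \ref{thm-voro}. The only difference is that you actually verify these claims; in particular, your conjugation by $(\overline{a},x)\mapsto(a,a+x)$ identifies $(\overline{\mathfrak{A}}\oplus\mathfrak{A},\{\widetilde{\mu_k}\})$ with the product $A_\infty$-algebra $\mathfrak{A}\times\mathfrak{A}$, which is a clean, sign-free way to supply the step $\llbracket\pi,\pi\rrbracket=0$ that the paper leaves as ``easy to see''.
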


This $L_\infty$-algebra is a generalization of the differential graded Lie algebra obtained in Theorem \ref{mc-gla}. Inspired by this, we may define the following.

\begin{defn}\label{defi-diff-homo}
   Let $(\mathfrak{A}, \{ \mu_k \}_{k=1}^\infty)$ be an $A_\infty$-algebra. Then a {\bf difference operator} on $\mathfrak{A}$ is an element $\mathfrak{D} \in C^0 (\mathfrak{A}, \mathfrak{A})$ such that the corresponding element (also denoted by the same notation) $\mathfrak{D} \in \mathcal{H}_0 = C^0 (\overline{\mathfrak{A}}, \mathfrak{A})$ is a Maurer-Cartan element of the $L_\infty$-algebra $(\mathcal{H} =  \oplus_{n \in \mathbb{Z}} C^n ( \overline{\mathfrak{A}},  \mathfrak{A}   ), \{ l_k \}_{k=1}^\infty)$ considered in Theorem \ref{l-inf-imp}, i.e.,
    \begin{align}\label{defi-homo-diff}
         \sum_{k=1}^\infty \frac{1}{k! } ~ \! l_k (\mathfrak{D}, \ldots, \mathfrak{D}) = 0.
    \end{align}
\end{defn}
\noindent We must note that any element $\mathfrak{D} \in C^0 (\mathfrak{A}, \mathfrak{A})$ is of the form $\mathfrak{D} = \sum_{p=1}^\infty \mathfrak{D}_p$, where $\mathfrak{D}_p : \mathfrak{A}^{\otimes p} \rightarrow \mathfrak{A}$ is a degree $0$ graded linear map. Hence the corresponding element $\mathfrak{D} \in \mathcal{H}_0 =  C^0 (\overline{\mathfrak{A}}, \mathfrak{A})$ is also given by $\mathfrak{D} = \sum_{p=1}^\infty \mathfrak{D}_p$, where $\mathfrak{D}_p :  \overline{\mathfrak{A}}^{\otimes p} \rightarrow \mathfrak{A}$ is the degree $0$ graded linear map given by $\mathfrak{D}_p ( \overline{a_1}, \ldots, \overline{a_p}) := \mathfrak{D}_p (a_1, \ldots, a_p)$, for $\overline{a_1}, \ldots, \overline{a_p} \in \overline{\mathfrak{A}}$.

\medskip

A pair $((\mathfrak{A}, \{ \mu_k \}_{k=1}^\infty), \mathfrak{D})$ consisting of an $A_\infty$-algebra with a difference operator is said to be a {\bf difference $A_\infty$-algebra}. It follows that a difference $A_\infty$-algebra can be regarded as the homotopification (in full generality) of a difference algebra. As mentioned earlier, the convention used in our definition of an $A_\infty$-algebra is related to the standard definition by a degree shift. Hence, one may settle the definition of difference operators on a standard $A_\infty$-algebra. In the standard definition, if the underlying graded vector space is concentrated in arities $0$ and $1$, we exactly obtain $2$-term $A_\infty$-algebras considered in Definition \ref{defi-2term-ainf}. In the same spirit, by expanding the condition (\ref{defi-homo-diff}), one may explicitly write the description of difference operators on a $2$-term $A_\infty$-algebra. More precisely, we have the following.

\begin{defn}
    Let $\mathfrak{a} = (A_1 \xrightarrow{\delta} A_0, \odot, \mu)$ be a $2$-term $A_\infty$-algebra. Then a {\bf difference operator} on $\mathfrak{a}$ is a triple $\mathfrak{d} = (d_0, d_1, d_2)$, where $d_0 : A_0 \rightarrow A_0$ and $d_1 : A_1 \rightarrow A_1$ are linear maps satisfying $d_0 \circ \delta = \delta \circ d_1$, and $d_2 : A_0 \times A_0 \rightarrow A_1$ is a bilinear map such that for all $x, y, z \in A_0$ and $h \in A_1$,
    \begin{align}
        d_0 (x) \odot y + x \odot d_0 (y) + d_0 (x) \odot d_0 (y) - d_0 (x \odot y) =~& \delta (d_2 (x, y)), \label{d1} \tag{D1} \\
        d_0 (x) \odot h + x \odot d_1 (h) + d_0 (x) \odot d_1 (h) - d_1 (x \odot h) =~& d_2 (x, \delta (h)), \label{d2} \tag{D2}\\
        d_1 (h) \odot x + h \odot d_0 (x) + d_1 (h) \odot d_0 (x) - d_1 (h \odot x) =~& d_2 (\delta (h), x), \label{d3} \tag{D3}
        \end{align}
        \begin{align}
        &(x + d_0 (x)) \odot d_2 (y, z) - d_2 (x \odot y, z) + d_2 (x, y \odot z) - d_2 (x, y) \odot (z + d_0 (z)) \nonumber \\
        &= \mu (d_0 (x), y, z) + \mu (x, d_0 (y), z) + \mu (x,y, d_0 (z)) + \mu (d_0 (x), d_0 (y), z) + \mu ( d_0 (x), y, d_0 (z)) \nonumber \\ 
        & \qquad + \mu (x, d_0 (y), d_0 (z)) 
        + \mu (d_0 (x), d_0 (y), d_0 (z)) - d_1 (\mu (x, y, z) ). \label{d4} \tag{D4}
    \end{align}
\end{defn}
\noindent A pair $(\mathfrak{a}, \mathfrak{d})$ consisting of a $2$-term $A_\infty$-algebra $\mathfrak{a}$ with a differential operator $\mathfrak{d}$ is said to be a {\bf $2$-term difference $A_\infty$-algebra}.

\begin{defn}
    Let $(\mathfrak{a}, \mathfrak{d}) = (  (A_1 \xrightarrow{\delta} A_0, \odot, \mu),  (d_0, d_1, d_2) )$ and  $(\mathfrak{a}', \mathfrak{d}') = (  (A'_1 \xrightarrow{\delta'} A'_0, \odot', \mu'),  (d'_0, d'_1, d'_2) )$ be $2$-term difference $A_\infty$-algebras. Then a {\bf difference $A_\infty$-homomorphism} from the former one to the later one is a quadruple $\varphi = (\varphi_0, \varphi_1, \varphi_2, \varphi_3)$, where $(\varphi_0, \varphi_1, \varphi_2)$ is an $A_\infty$-homomorphism of the underlying $A_\infty$-algebras from $\mathfrak{a} = (A_1 \xrightarrow{\delta} A_0, \odot, \mu)$ to $\mathfrak{a}' = (A_1' \xrightarrow{\delta'} A'_0, \odot', \mu')$, and $\varphi_3: A_0 \rightarrow A'_1$ is a linear map satisfying
    \begin{align}
        \varphi_0 (d_0 (x)) - d_0' (\varphi_0 (x)) =~& \delta' (\varphi_3 (x)), \label{hd-eq1}\\
        \varphi_1 (d_1 (h)) - d_1' (\varphi_1 (h)) =~& \varphi_3 (\delta (h)), \label{hd-eq2}\\
        \varphi_1 (d_2 (x, y)) - d_2' ( \varphi_0 (x), \varphi_0 (y)) =~& \big\{ \varphi_2 (d_0 (x), y) + \varphi_2 (x, d_0 (y)) + \varphi_2 (d_0 (x), d_0 (y)) - \varphi_3 (x \odot y)   \big\} \nonumber \\
        & + \big\{ \varphi_3 (x) \odot'\varphi_0 (y) + \varphi_0 (x) \odot' \varphi_3 (y) - d_1' (\varphi_2 (x, y))     \big\}, \label{hd-eq3}
    \end{align}
    for all $x, y \in A_0$ and $h \in A_1.$
\end{defn}

With the above definitions, we have the following.

\begin{thm}\label{thm-categ-2term}
    The collection of all $2$-term difference $A_\infty$-algebras and difference $A_\infty$-homomorphisms between them is a category. (We denote this category by {\sf 2TermDiffA$_\infty$}).
\end{thm}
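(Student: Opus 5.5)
We need to define composition and identities, check that the composite satisfies the axioms, and check associativity and unitality.

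For identities, the identity morphism on $(\mathfrak{a},\mathfrak{d})$ is $(\mathrm{id}_{A_0},\mathrm{id}_{A_1},0,0)$. All of \eqref{hd-eq1}--\eqref{hd-eq3} then hold trivially, since both sides vanish.

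For composition, let $\varphi=(\varphi_0,\varphi_1,\varphi_2,\varphi_3):(\mathfrak{a},\mathfrak{d})\to(\mathfrak{a}',\mathfrak{d}')$ and $\psi=(\psi_0,\psi_1,\psi_2,\psi_3):(\mathfrak{a}',\mathfrak{d}')\to(\mathfrak{a}'',\mathfrak{d}'')$. I will take the underlying $A_\infty$-homomorphism part to be the standard composite in ${\sf 2TermA_\infty}$:
\begin{align*}
(\psi\circ\varphi)_0=\psi_0\varphi_0,\quad (\psi\circ\varphi)_1=\psi_1\varphi_1,\quad (\psi\circ\varphi)_2(x,y)=\psi_1(\varphi_2(x,y))+\psi_2(\varphi_0(x),\varphi_0(y)).
\end{align*}
These are the formulas mirroring $(G\circ F)_2$ in the proof of Theorem \ref{thm-categ-diff2}. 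That this triple is an $A_\infty$-homomorphism is known from the category ${\sf 2TermA_\infty}$. For the new component I set
\begin{align*}
(\psi\circ\varphi)_3(x)=\psi_1(\varphi_3(x))+\psi_3(\varphi_0(x)),
\end{align*}
again mirroring $(G\circ F)_3$.

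The verification then proceeds in three checks.

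\textbf{Check \eqref{hd-eq1}.} We have
\[
\psi_0\varphi_0d_0-d_0''\psi_0\varphi_0=\psi_0(\varphi_0d_0-d_0'\varphi_0)+(\psi_0d_0'-d_0''\psi_0)\varphi_0=\psi_0\delta'\varphi_3+\delta''\psi_3\varphi_0 .
\]
Using $\psi_0\delta'=\delta''\psi_1$, this equals $\delta''\big((\psi\circ\varphi)_3\big)$.

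\textbf{Check \eqref{hd-eq2}.} The same telescoping works, using $\varphi_3\delta$ and $\psi_3\delta'$ and then rewriting $\psi_3(\delta'\varphi_1(h))$ as $\psi_3(\varphi_0\delta(h))$.

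\textbf{Check \eqref{hd-eq3}.} This is the main obstacle. I would write
\[
\psi_1\varphi_1d_2-d_2''(\psi_0\varphi_0\,\cdot,\psi_0\varphi_0\,\cdot)=\psi_1\big(\varphi_1d_2-d_2'(\varphi_0,\varphi_0)\big)+\big(\psi_1d_2'-d_2''(\psi_0,\psi_0)\big)(\varphi_0,\varphi_0).
\]
I then apply \eqref{hd-eq3} for $\varphi$ inside $\psi_1$, and \eqref{hd-eq3} for $\psi$ evaluated at $\varphi_0(x),\varphi_0(y)$. The resulting expression must be matched term by term with the right-hand side for the composite. The mismatches are of three kinds:
\begin{itemize}
\item terms $\psi_1(\varphi_3(x)\odot'\varphi_0(y))$ and $\psi_1(\varphi_0(x)\odot'\varphi_3(y))$, which must be converted using the second and third $A_\infty$-homomorphism identities for $\psi$ (these produce $\psi_2(\varphi_0x,\delta'\varphi_3 y)$-type terms);
\item terms $\psi_3(\varphi_0(x)\odot'\varphi_0(y))$ versus $\psi_3(\varphi_0(x\odot y))$, which differ by $\psi_3(\delta'\varphi_2(x,y))$; I rewrite this via \eqref{hd-eq2} for $\psi$ as $\psi_1d_1'\varphi_2-d_1''\psi_1\varphi_2$;
\item terms $\psi_2(\varphi_0 d_0 x,\varphi_0 y)$ versus $\psi_2(d_0'\varphi_0x,\varphi_0y)$, which differ by $\psi_2(\delta'\varphi_3 x,\varphi_0 y)$, cancelling against the first kind.
\end{itemize}
The bookkeeping of these cancellations is where I expect the effort to lie. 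I would organize it by grouping terms according to whether they contain $\varphi_2$, $\varphi_3$, $\psi_2$ or $\psi_3$.

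Finally, associativity of composition follows by direct expansion. The components $0,1$ are plain composites. For component $3$, both bracketings give
\[
\chi_1\psi_1\varphi_3+\chi_1\psi_3\varphi_0+\chi_3\psi_0\varphi_0 ,
\]
and component $2$ works analogously. Unitality is immediate from the zero components of the identity.
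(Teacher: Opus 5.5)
The composition law and identity morphisms you propose are exactly the ones in the paper, and the paper's proof also stops at ``one may easily verify''. The gap is that the check of \eqref{hd-eq3} for $\Phi=\psi\circ\varphi$ does not close, and your list of mismatches leaves out the one that fails.

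Write $X=\varphi_0(x)$ and $Y=\varphi_0(y)$. On the right, the term $\Phi_2(d_0x,d_0y)$ contains $\psi_2(\varphi_0d_0x,\varphi_0d_0y)$. Condition \eqref{hd-eq3} for $\psi$ supplies only $\psi_2(d_0'X,d_0'Y)$. Substituting $\varphi_0d_0=d_0'\varphi_0+\delta'\varphi_3$ in both slots leaves
\[
R(x,y)=\psi_2(\delta'\varphi_3x,\,d_0'Y)+\psi_2(d_0'X,\,\delta'\varphi_3y)+\psi_2(\delta'\varphi_3x,\,\delta'\varphi_3y).
\]
Nothing is available to absorb it, because your first-kind terms are already used up against $\Phi_2(d_0x,y)$ and $\Phi_2(x,d_0y)$. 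After all your other cancellations, the left side minus the right side of \eqref{hd-eq3} equals exactly $-R(x,y)$.

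This residual is not zero in general. Take the following data, with all $\mu$'s zero:
\begin{itemize}
\item[(i)] $\mathfrak{a}=(0\to\mathbf{k})$ with zero product and $\mathfrak{d}=(\mathrm{Id},0,0)$;
\item[(ii)] $\mathfrak{a}'=(\mathbf{k}\xrightarrow{\mathrm{Id}}\mathbf{k})$ with all products zero and $\mathfrak{d}'=0$;
\item[(iii)] $\mathfrak{a}''=(\mathbf{k}\xrightarrow{\mathrm{Id}}\mathbf{k})$ with all products given by multiplication in $\mathbf{k}$ and $\mathfrak{d}''=0$.
\end{itemize}
Then $\varphi=(\mathrm{Id},0,0,\mathrm{Id})$ and $\psi=(\mathrm{Id},\mathrm{Id},\psi_2,0)$ with $\psi_2(X,Y)=-XY$ satisfy every axiom. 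The composite is $\Phi=(\mathrm{Id},0,\Phi_2,\Phi_3)$ with $\Phi_2(x,y)=-xy$ and $\Phi_3=\mathrm{Id}$. For it, the left side of \eqref{hd-eq3} is $0$, while the right side is $-3xy+xy+xy=-xy$. Once $\Phi_0=\psi_0\varphi_0$ is fixed, the first $A_\infty$-homomorphism identity and \eqref{hd-eq1} force this $\Phi_2$ and $\Phi_3$. So with \eqref{hd-eq3} as printed, the collection is not closed under composition, and the theorem fails for the definition as stated.

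The defect lies in \eqref{hd-eq3}, not in your bookkeeping. The component $F_3(x)\bullet' F_3(y)$ of diagram \eqref{2-diff-homo} has arrow part
\[
d_0'(X)\odot'\varphi_3(y)+\varphi_3(x)\odot' d_0'(Y)+\delta'(\varphi_3x)\odot'\varphi_3(y).
\]
These three terms are missing from the printed right-hand side of \eqref{hd-eq3}, and from the corresponding step in the proof of Theorem \ref{thm-main}. You can detect the omission by applying $\delta'$ to \eqref{hd-eq3}. Together with the other axioms, it then forces $\varphi_0(d_0x)\odot'\varphi_0(d_0y)=d_0'(X)\odot' d_0'(Y)$, a constraint that composition does not preserve.

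Once those terms are added, your scheme goes through. Rewrite $\psi_1$ of the new $\varphi$-terms using the second and third $A_\infty$-homomorphism identities for $\psi$. This produces exactly $R(x,y)$, plus products such as $\psi_1\varphi_3(x)\odot''\psi_0(d_0'Y)$. Those products cancel against the new terms for $\psi$ and for $\Phi$, using \eqref{hd-eq1} for $\psi$, the relation $\psi_0\delta'=\delta''\psi_1$, and \eqref{a3}.

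The rest of your proposal is correct: the identity morphisms, the checks of \eqref{hd-eq1} and \eqref{hd-eq2}, associativity and unitality.
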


\begin{proof}
    As before, we only define the composition of difference $A_\infty$-homomorphisms and describe identity homomorphisms. Let $ \varphi = (\varphi_0, \varphi_1, \varphi_2, \varphi_3) : (\mathfrak{a}, \mathfrak{d}) \rightarrow (\mathfrak{a}', \mathfrak{d}')$ and $\psi = (\psi_0, \psi_1, \psi_2, \psi_3) : (\mathfrak{a}', \mathfrak{d}') \rightarrow (\mathfrak{a}'' , \mathfrak{d}'' )$ be two difference $A_\infty$-homomorphisms. We define their composition $\psi \circ \varphi :  (\mathfrak{a}, \mathfrak{d}) \rightarrow  (\mathfrak{a}'', \mathfrak{d}'')$ by taking
    \begin{center}
        $(\psi \circ \varphi)_0 := \psi_0 \circ \varphi_0, \quad (\psi \circ \varphi)_1 := \psi_1 \circ \varphi_1, \quad
        (\psi \circ \varphi)_2 (x, y) := \psi_2 ( \varphi_0 (x), \varphi_0 (y)) + \psi_1 (\varphi_2 (x, y)), $
        \end{center}
        \begin{center}
        $(\psi \circ \varphi)_3 (x) := \psi_3 (\varphi_0 (x)) + \psi_1 (\varphi_3 (x)),$
    \end{center}
    for $x, y \in A_0$. Next, for any $2$-term difference $A_\infty$-algebra $(\mathfrak{a}, \mathfrak{d}) =  (  (A_1 \xrightarrow{\delta} A_0, \odot, \mu),  (d_0, d_1, d_2) )$, we define the identity homomorphism $1_{ (\mathfrak{a}, \mathfrak{d})} : (\mathfrak{a}, \mathfrak{d}) \rightarrow (\mathfrak{a}, \mathfrak{d})$ by taking
    \begin{align*}
        (1_{ (\mathfrak{a}, \mathfrak{d})})_0 = \mathrm{Id}_{A_0}, \quad (1_{ (\mathfrak{a}, \mathfrak{d})})_1 =\mathrm{Id}_{A_1}, \quad (1_{ (\mathfrak{a}, \mathfrak{d})})_2 = 0 \quad \text{ and } \quad (1_{ (\mathfrak{a}, \mathfrak{d})})_3 = 0.
    \end{align*}
    With this setup, one may easily verify that {\sf 2TermDiffA$_\infty$} is a category.
\end{proof}

We are now ready to consider the main result of this paper.

\begin{thm}\label{thm-main}
    The categories {\sf DiffAss2} and {\sf 2TermDiffA$_\infty$} are equivalent.
\end{thm}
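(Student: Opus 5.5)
We construct two functors $S : {\sf 2TermDiffA_\infty} \rightarrow {\sf DiffAss2}$ and $T : {\sf DiffAss2} \rightarrow {\sf 2TermDiffA_\infty}$ that extend the known functors giving the equivalence ${\sf Ass2} \simeq {\sf 2TermA_\infty}$ of \cite{das-hom,kh}. We then exhibit natural isomorphisms $S \circ T \cong 1_{\sf DiffAss2}$ and $T \circ S = 1_{\sf 2TermDiffA_\infty}$.

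\textbf{The functor $S$.} Let $(\mathfrak{a}, \mathfrak{d})$ be a $2$-term difference $A_\infty$-algebra. Put $C_0 = A_0$ and $C_1 = A_0 \oplus A_1$, with $s(x,h) = x$, $t(x,h) = x + \delta(h)$, $i(x) = (x,0)$, and composition $(x + \delta(h), k) \circ (x,h) = (x, h+k)$. The multiplication is $(x,h) \bullet (y,k) = (x \odot y,\, x \odot k + h \odot y + \delta(h) \odot k)$, and the associator is $\mathcal{A}_{x,y,z} = ((x \odot y) \odot z, \mu(x,y,z))$. This is the known construction. We then set $D_0 = d_0$, $D_1(x,h) = (d_0(x), d_1(h))$ and $\mathcal{D}_{x,y} = (d_0(x \odot y), d_2(x,y))$. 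The checks are as follows.
\begin{itemize}
\item $D$ is a linear functor: it respects source, target and composition because $d_0 \delta = \delta d_1$.
\item $\mathcal{D}_{x,y}$ has the correct target; this is exactly \eqref{d1}.
\item Naturality of $\mathcal{D}$ in morphisms $(x,h)$ and $(y,k)$ reduces to \eqref{d2} and \eqref{d3}. By bilinearity it suffices to check one slot at a time, and the cross term $\delta(h) \odot k$ is handled using \eqref{a3}.
\item The coherence diagram \eqref{2-diff} becomes an equality of arrow parts in $A_1$, which is precisely \eqref{d4}. Arrow parts add under composition, and the arrow part of $f \bullet 1_z$ is $\overrightarrow{f} \odot z$.
\end{itemize}
For a difference $A_\infty$-homomorphism $\varphi$, take the known $(F_0,F_1,F_2)$ and set $F_3(x) = (d_0'(\varphi_0(x)), \varphi_3(x))$. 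Its target is correct by \eqref{hd-eq1}, its naturality follows from \eqref{hd-eq2}, and the commutativity of \eqref{2-diff-homo} is \eqref{hd-eq3} read on arrow parts.

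\textbf{The functor $T$.} Let $(C,D,\mathcal{D})$ be a difference associative $2$-algebra. Put $A_0 = C_0$, $A_1 = \ker(s)$, $\delta = t|_{\ker s}$, $d_0 = D_0$ and $d_1 = D_1|_{\ker s}$; the restriction makes sense because $D$ commutes with $s$. Take $d_2(x,y) = \overrightarrow{\mathcal{D}_{x,y}}$, together with the known $\odot$ and $\mu(x,y,z) = \overrightarrow{\mathcal{A}_{x,y,z}}$. The identities \eqref{d1}--\eqref{d4} follow by reversing the computations above. On morphisms, set $\varphi_3(x) = \overrightarrow{F_3(x)}$. Composition in each category is defined so that it corresponds to composing arrows. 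By Theorem \ref{thm-categ-diff2} and Theorem \ref{thm-categ-2term}, preserving composition then amounts to $\overrightarrow{g \circ f} = \overrightarrow{g} + \overrightarrow{f}$, together with $\overrightarrow{G_1(f)} = G_1(\overrightarrow{f})$.

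\textbf{Equivalence.} We have $T \circ S = 1$ on the nose, since $\ker s = \{(0,h)\} \cong A_1$. For $S \circ T$, use the known isomorphism $\Phi : C \rightarrow S T(C)$, given by the identity on objects and $f \mapsto (s(f), \overrightarrow{f})$ on morphisms, with trivial $\Phi_2$. Equip it with $\Phi_3 = $ identity. This is valid because $\Phi_1$ carries $\mathcal{D}_{x,y}$ to $(D_0(xy), \overrightarrow{\mathcal{D}_{x,y}})$, which is exactly the $\mathcal{D}$ of $S T(C)$. Naturality of $\Phi$ with respect to homomorphisms $(F_0, \ldots, F_3)$ follows from the same arrow-part bookkeeping.

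\textbf{Main obstacle.} The one step that is more than bookkeeping is translating the coherence diagram \eqref{2-diff} into \eqref{d4}, and likewise \eqref{2-diff-homo} into \eqref{hd-eq3}. We handle it by computing the arrow part along each of the two paths of the diagram. The left path contributes $d_2(x \odot y, z)$, $d_2(x,y) \odot z$, $d_2(x,y) \odot d_0(z)$ and the seven $\mu$ terms. The right path contributes $d_1(\mu(x,y,z))$, $d_2(x, y \odot z)$, $x \odot d_2(y,z)$ and $d_0(x) \odot d_2(y,z)$. Equating the two sums gives \eqref{d4}. In these expansions, the $\delta(h) \odot k$ correction in $\bullet$ does not appear, because one factor of each product is always an identity.
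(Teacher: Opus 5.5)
Your construction is the paper's. You use the same two functors (your $S$ and $T$ are the paper's $\mathsf{T}$ and $\mathsf{S}$), the composite on the $A_\infty$ side is the identity, and your $\Phi$ is the inverse of the paper's $\alpha$. Your verification of \eqref{d1}--\eqref{d4} is correct, including the cross term $\delta(h)\odot k$ in the naturality of $\mathcal{D}$.

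\textbf{The step that fails.} You claim that \eqref{2-diff-homo} is \eqref{hd-eq3} read on arrow parts. Your justification is that one factor of every product is an identity. That holds for \eqref{2-diff}, but not for \eqref{2-diff-homo}: its right-hand vertical arrow contains the summand $F_3(x)\bullet' F_3(y)$, a product of two non-identity morphisms. Take $F_3(x) = (d_0'(\varphi_0(x)), \varphi_3(x))$ and apply your own formula for $\bullet'$ on $A_0'\oplus A_1'$. The arrow part of this summand is
\[
d_0'(\varphi_0(x))\odot'\varphi_3(y) + \varphi_3(x)\odot' d_0'(\varphi_0(y)) + \delta'(\varphi_3(x))\odot'\varphi_3(y),
\]
and no such terms occur in \eqref{hd-eq3}. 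So the diagram is equivalent to \eqref{hd-eq3} with this bracket added to the right-hand side. By \eqref{a3} and \eqref{hd-eq1}, this amounts to replacing $\varphi_3(x)\odot'\varphi_0(y) + \varphi_0(x)\odot'\varphi_3(y)$ by $\varphi_3(x)\odot'\varphi_0(y + d_0(y)) + \big(\varphi_0(x) + d_0'(\varphi_0(x))\big)\odot'\varphi_3(y)$. This is the same twisting by $x + d_0(x)$ that appears in \eqref{d4}.

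\textbf{Why this is not cosmetic.} With \eqref{hd-eq3} as printed, neither of your functors is well defined on morphisms.

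First example. Take $A_0 = A_1 = \mathbf{k}$, $\delta = \mathrm{id}$, $\odot$ the multiplication, $\mu = 0$, $d_2 = 0$. Put $d_0 = d_1 = 0$ on the source and $d_0' = d_1' = -\mathrm{id}$ on the target. Both associated $2$-vector spaces have exactly one morphism between any two objects, so every diagram commutes. Hence the identity functor with $F_2 = 1$ and $F_3(x) : -x \to 0$ is an isomorphism in {\sf DiffAss2}. Its image $(\mathrm{id}, \mathrm{id}, 0, \mathrm{id})$ gives $0$ on the left of \eqref{hd-eq3} but $-xy + xy + xy = xy$ on the right. The missing bracket equals $-xy$ and restores equality.

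Second example, for the other direction. Take the skeletal object $\mathbf{k} \xrightarrow{0} \mathbf{k}$ with $A_1 = \mathbf{k}$ acted on by left multiplication and trivially from the right, and $d_0 = d_1 = -\mathrm{id}$, $\mu = 0$, $d_2 = 0$. The quadruple $(\mathrm{id}, \mathrm{id}, 0, \lambda\,\mathrm{id})$ satisfies the printed \eqref{hd-eq3}. However, for $\lambda \neq 0$ it does not make \eqref{2-diff-homo} commute.

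The paper's own arrow-part identity for \eqref{2-diff-homo} drops exactly the summand $\overrightarrow{F_3(x)\bullet' F_3(y)}$, so your proof inherits this slip rather than introducing it. Once \eqref{hd-eq3} is amended as above, the rest of your proposal goes through as written: the compatibility with the composition of Theorem \ref{thm-categ-2term}, the naturality of $\Phi$, and $T\circ S = 1$.
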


\begin{proof}
Let $((C, \bullet, \mathcal{A}), D, \mathcal{D})$ be a difference associative $2$-algebra. Since $(C = (C_1 \rightrightarrows C_0), \bullet, \mathcal{A})$ is an associative $2$-algebra, it follows from \cite{das-hom,kh} that $(A_1 \xrightarrow{\delta} A_0, \odot, \mu)$ is a $2$-term $A_\infty$-algebra, where
\begin{align}
   & A_0 = C_0, \quad A_1 = \mathrm{ker} (s) \subset C_1, \quad \delta (h) := t |_{\mathrm{ker}(s)} (h), \quad x \odot y := x \bullet y, \nonumber\\
   & \quad  x \odot h := i_x \bullet h, \quad h \odot x := h \bullet i_x ~~~ \text{ and } ~~~ \mu(x, y, z) := \overrightarrow{\mathcal{A}_{x, y, z}}, \label{some1}
\end{align}
for $x, y, z \in A_0$ and $h \in A_1$. Next, for the given difference operator $(D, \mathcal{D})$ on the associative $2$-algebra, we define two linear maps $d_0 : A_0 \rightarrow A_0$ and $d_1 : A_1 \rightarrow A_1$, and a bilinear map $d_2 : A_0 \times A_0 \rightarrow A_1$ by
\begin{align}
    d_0 (x) := D_0 (x), \quad d_1 (h) := D_1 |_{\mathrm{ker}(s)}  (h), \quad d_2 (x, y) := \overrightarrow{ \mathcal{D}_{x, y}}, \label{some2}
\end{align}
for $x, y, z \in A_0$ and $h \in A_1$. For any $x, y \in A_0$, we observe that 
\begin{align*}
    d_0 (x) \odot y + x \odot d_0 (y) + d_0 (x) \odot d_0 (y) - d_0 (x \odot y ) =~& D_0 (x) \bullet y + x \bullet D_0 (y) + D_0 (x) \bullet D_0 (y) - D_0 (x \bullet y) \\
    \stackrel{(\ref{defi-D})}{=}& (t-s) ~ \! \mathcal{D}_{x, y} = t ~ \! \overrightarrow{\mathcal{D}_{x, y}} = \delta (d_2 (x, y)).
\end{align*}
Hence, the identity (\ref{d1}) holds. Next, let $x \in A_0$ and $h \in A_1 = \mathrm{ker}(s)$. Then $h$ must be of the form $h = \overrightarrow{f}$, for some morphism $f : y \rightarrow z$. From the naturality of $\mathcal{D}$, we have
\begin{align*}
    \big( i_{D_0 (x)} \bullet f + i_x \bullet D_1 (f) + i_{D_0 (x)} \bullet D_1 (f)     \big) \circ \mathcal{D}_{x, y} = \mathcal{D}_{x, z} \circ \big( D_1 (i_x \bullet f)    \big).
\end{align*}
By taking the arrow parts of both sides of the above equation, we get
\begin{align*}
    \overrightarrow{ i_{D_0 (x)} \bullet f } + \overrightarrow{ i_x \bullet D_1 (f)} + \overrightarrow{ i_{D_0 (x)} \bullet D_1 (f)  } + \overrightarrow{ \mathcal{D}_{x, y}} = \overrightarrow{  \mathcal{D}_{x, z}} + \overrightarrow{ D_1 (i_x \bullet f)   }.
\end{align*}
i.e.,
\begin{align*}
    i_{D_0 (x)} \bullet \overrightarrow{f} + i_x \bullet D_1 ( \overrightarrow{f} ) + i_{D_0 (x)} \bullet D_1 ( \overrightarrow{f} ) - D_1 (i_x \bullet \overrightarrow{f}) = \overrightarrow{ \mathcal{D}_{x, z-y}}.
\end{align*}
This implies that
\begin{align*}
    d_0 (x) \odot h + x \odot d_1 (h) + d_0 (x) \odot d_1 (h) - d_1 (x \odot h) = d_2 (x, \delta (h)),
\end{align*}
which verifies the identity (\ref{d2}). Similarly, one may also verify the identity (\ref{d3}). Finally, the diagram given in (\ref{2-diff}) is equivalent to
\begin{align*}
    &i_{x+ D_0 (x)} \bullet \overrightarrow{ \mathcal{D}_{y, z}} ~+~ \overrightarrow{ \mathcal{D}_{x, y \bullet z}} ~+~ D_1 (\overrightarrow{ \mathcal{A}_{x, y, z} }) = \overrightarrow{ \mathcal{A}_{D_0 (x), y, z} } ~+~ \overrightarrow{ \mathcal{A}_{x, D_0 (y), z} } ~+~ \overrightarrow{ \mathcal{A}_{x, y, D_0 (z) }} ~+~ \overrightarrow{ \mathcal{A}_{D_0 (x), D_0 (y), z} } \\
    & \qquad \quad +~ \overrightarrow{ \mathcal{A}_{D_0 (x), y, D_0 (z)}} ~+~ \overrightarrow{ \mathcal{A}_{x, D_0 (y), D_0 (z)}} ~+~ \overrightarrow{ \mathcal{A}_{D_0 (x), D_0 (y), D_0 (z)}} ~+~ \overrightarrow{ \mathcal{D}_{x \bullet y , z}} ~+~ \overrightarrow{ \mathcal{D}_{x, y}} \bullet i_{z + D_0 (z)}.
\end{align*}
Hence, we obtain that
\begin{align*}
     &(x + d_0 (x)) \odot d_2 (y, z)  + d_2 (x, y \odot z) + d_1 (\mu (x, y, z) ) \nonumber \\
        &= \mu (d_0 (x), y, z) + \mu (x, d_0 (y), z) + \mu (x,y, d_0 (z)) + \mu (d_0 (x), d_0 (y), z) + \mu ( d_0 (x), y, d_0 (z)) \nonumber \\ 
        & \qquad + \mu (x, d_0 (y), d_0 (z)) 
        + \mu (d_0 (x), d_0 (y), d_0 (z)) + d_2 (x \odot y, z) + d_2 (x, y) \odot (z + d_0 (z)) .
\end{align*}
Therefore, the identity (\ref{d4}) also holds. Hence $((A_1 \xrightarrow{\delta} A_0, \odot, \mu ), (d_0, d_1, d_2) )$ is a $2$-term difference $A_\infty$-algebra.

\medskip

Next, let $((C, \bullet, \mathcal{A}), D, \mathcal{D})$ and $( (C', \bullet', \mathcal{A}') , D', \mathcal{D}')$ be two difference associative $2$-algebras, and 
\begin{align*}
    F = (F_0, F_1, F_2, F_3) : ((C, \bullet, \mathcal{A}), D, \mathcal{D}) \rightarrow ( (C', \bullet', \mathcal{A}'), D', \mathcal{D}') 
\end{align*}
be a homomorphism of difference associative $2$-algebras. Since $(F_0, F_1, F_2) : (C, \bullet, \mathcal{A}) \rightarrow (C', \bullet' , \mathcal{A}')$ is a homomorphism between the underlying associative $2$-algebras, it follows from \cite{das-hom,kh} that the triple
\begin{align*}
     (\varphi_0, \varphi_1, \varphi_2) : (A_1 \xrightarrow{\delta} A_0, \odot, \mu ) \rightarrow (A'_1 \xrightarrow{\delta'} A_0', \odot', \mu' )
\end{align*}
is an $A_\infty$-homomorphism between the corresponding $2$-term $A_\infty$-algebras, where
\begin{align*}
    \varphi_0 (x) := F_0 (x), \quad \varphi_1 (h) := F_1 \big|_{\mathrm{ker} (s)} (h) ~~~~ \text{ and } ~~~~\varphi_2 (x, y) := \overrightarrow{F_2 (x, y)}, 
\end{align*}
for $x, y \in A_0$ and $h \in A_1$. Next, we define a map $\varphi_3 : A_0 \rightarrow A_1'$ by 
\begin{align*}
    \varphi_3 (x) := \overrightarrow{ F_3 (x) }, \text{ for } x \in A_0.
\end{align*}
Then for any $x \in A_0$, we see that
\begin{align*}
    \varphi_0 (d_0 (x)) - d_0'(\varphi_0 (x)) = F_0 (D_0 (x)) - D_0' (F_0 (x)) \stackrel{(\ref{f3})}{ = } (t' - s') ~\! F_3 (x) = t' ~\! \overrightarrow{ F_3 (x)} = \delta' (\varphi_3 (x)).
\end{align*}
Next, take $h \in A_1 = \mathrm{ker}(s)$. As before, let $h = \overrightarrow{f}$, for some morphism $f : x \rightarrow y$. Note that the naturality of $F_3$ implies that 
\begin{align*}
    F_1 (D_1 (f)) ~ \! F_3 (x) = F_3 (y) ~ \! D_1' (F_1 (f)).
\end{align*}
By considering the arrow parts in both sides, we obtain $\overrightarrow{ F_3 (x)} + F_1 ( D_1 (\overrightarrow{f})) = D_1'(F_1 (\overrightarrow{f}) ) + \overrightarrow{F_3 (y)}$. This shows that
\begin{align*}
    F_1 ( D_1 (\overrightarrow{f})) - D_1'(F_1 (\overrightarrow{f}) )  = \overrightarrow{F_3 (y-x)}, \quad \text{i.e.,} \quad \varphi_1 ( d_1 (h)) - d_1'(\varphi_1 (h)) = \varphi_3 (\delta ( h )).
\end{align*}
Hence, both the identities (\ref{hd-eq1}) and (\ref{hd-eq2}) hold. Finally, the diagram given in (\ref{2-diff-homo}) implies that
\begin{align*}
    &\big(  \overrightarrow{ F_2 (D_0 (x), y)} + \overrightarrow{ F_2 (x, D_0 (y))} + \overrightarrow{ F_2 (D_0 (x), D_0 (y)) } ~ \big) + \big( \overrightarrow{F_3 (x) } \bullet' i_{F_0 (y)} + i_{F_0 (x)} \bullet' \overrightarrow{ F_3 (y)} ~ \big) + \overrightarrow{ \mathcal{D}'_{F_0 (x), F_0 (y)}} \\
    & \qquad \qquad \qquad \qquad = F_1 ( \overrightarrow{ \mathcal{D}_{x, y}}) + \overrightarrow{ F_3 (x \bullet y)} + D_1' ( \overrightarrow{ F_2 (x, y)}).
\end{align*}
which in turn shows the identity (\ref{hd-eq3}). This concludes that 
\begin{align*}
    (\varphi_0, \varphi_1, \varphi_2, \varphi_3) : ((A_1 \xrightarrow{\delta} A_0, \odot, \mu ), (d_0, d_1, d_2) ) \rightarrow ((A'_1 \xrightarrow{\delta'} A'_0, \odot', \mu' ), (d'_0, d'_1, d'_2) )
\end{align*}
is a difference $A_\infty$-homomorphism. It is easy to see that the above construction preserves the identity homomorphisms and composition of homomorphisms. Thus, we get a functor {\sf S} : {\sf DiffAss2} $\rightarrow$ {\sf 2TermDiffA$_\infty$}.

\medskip

On the other hand, suppose we start with a $2$-term difference $A_\infty$-algebra $ ((A_1 \xrightarrow{\delta} A_0, \odot, \mu ), (d_0, d_1, d_2) )$. As $(A_1 \xrightarrow{\delta} A_0, \odot, \mu )$ is a $2$-term $A_\infty$-algebra, we get that $(C = (C_1 \rightrightarrows C_0), \bullet, \mathcal{A})$ is an associative $2$-algebra, where $C_0 = A_0$ and $C_1 = A_0 \oplus A_1$. The source, target, identity-assigning map and the composition map are respectively given by
\begin{align*}
   & s (x, h) := x, \quad t (x, h) := x + \delta (h), \quad i_x := (x, 0) ~~~ \text{ and }\\
   & \quad (x, h) \circ (y, k) = (x, h+k), \text{ when } t (x, h) = s (y,k).
\end{align*}
The bilinear functor $\bullet : C \times C \rightarrow C$ is given by
\begin{align*}
     x \bullet y := x \odot y ~~~ \text{ and } ~~~ (x, h) \bullet (y, k) := (x \odot y, x \odot k + h \odot y + \delta (h) \odot k),
\end{align*}
for objects $x, y \in C_0$ and morphisms $(x, h), (y, k) \in C_1$. Moreover, the associator for $C$ is given by
\begin{align*}
    \mathcal{A}_{x, y, z} := \big(  (x \bullet y) \bullet z ~\!, ~ \! \mu (x, y, z)   \big).
\end{align*}
Next, from the difference operator $(d_0, d_1, d_2)$ on the $2$-term $A_\infty$-algebra $(A_1 \xrightarrow{\delta} A_0, \odot, \mu )$, we define a linear functor $D = (D_0, D_1) : C \rightarrow C$ by
\begin{align*}
    D_0 (x) := d_0 (x) ~~~ \text{ and } ~~~D_1 (x, h) := (d_0 (x), d_1 (h)),
\end{align*}
for $x \in C_0$ and $(x, h) \in C_1$. Moreover, we set a natural isomorphism
\begin{align*}
    \mathcal{D}_{x, y} : D_0 (x \bullet y) \rightarrow D_0 (x) \bullet y + x \bullet D_0 (y) + D_0 (x) \bullet D_0 (y) ~~~~   \text{ by } ~~~~  \mathcal{D}_{x, y} := \big(  D_0 (x \bullet y) ~\!, ~\! d_2 (x, y)  \big).
\end{align*}
Then it turns out that the pair $(D, \mathcal{D})$ is a difference operator on the associative $2$-algebra $(C, \bullet, \mathcal{A})$ constructed above. In other words, $((C , \bullet, \mathcal{A}), D, \mathcal{D})$ is a difference associative $2$-algebra.

\medskip

Let $ (\varphi_0, \varphi_1, \varphi_2, \varphi_3) : ((A_1 \xrightarrow{\delta} A_0, \odot, \mu ), (d_0, d_1, d_2) ) \rightarrow ((A'_1 \xrightarrow{\delta'} A'_0, \odot', \mu' ), (d'_0, d'_1, d'_2) )$ be a difference $A_\infty$-homomorphism. Since $(\varphi_0, \varphi_1, \varphi_2) : (A_1 \xrightarrow{\delta} A_0, \odot, \mu ) \rightarrow (A'_1 \xrightarrow{\delta'} A'_0, \odot', \mu' )$ is an $A_\infty$-homomorphism, it follows that the triple $(F_0, F_1, F_2) $ is a homomorphism between the corresponding associative $2$-algebras, where
\begin{align*}
    &F_0 : A_0 \rightarrow A_0' ~~ \text{ is given by } ~~ F_0 (x) := \varphi_0 (x),\\
    &F_1 : A_0 \oplus A_1 \rightarrow A_0' \oplus A_1' ~~ \text{ is given by } ~~  F_1 (x, h) := ( \varphi_0 (x), \varphi_1 (h)), \\
    &F_2 (x, y) : F_0 (x) \bullet' F_0 (y) \rightarrow F_0 (x \bullet y)  ~~ \text{ is given by } ~~ F_2 (x, y) := \big(  F_0 (x) \bullet' F_0 (y) ~ \! , ~ \! \varphi_2 (x \bullet y)  \big).
\end{align*}
We also define a natural transformation $F_3 (x) : D_0' (F_0 (x)) \rightarrow F_0 (D_0 (x))$ by $F_3 (x) := \big( D_0' (F_0 (x)) ~ \!, ~ \! \varphi_3 (x)    \big).$ (Note that the naturality of $F_3$ follows from the condition (\ref{hd-eq2})). Then the identity (\ref{hd-eq3}) ensures the commutativity of the diagram given in (\ref{2-diff-homo}). Hence the quadruple
\begin{align*}
     (F_0, F_1, F_2, F_3) : ((C , \bullet, \mathcal{A}), D, \mathcal{D}) \rightarrow ((C' , \bullet', \mathcal{A}'), D', \mathcal{D}')
\end{align*}
is a homomorphism of difference associative $2$-algebras. It is also easy to see that the above construction preserves the identity homomorphisms and composition of homomorphisms. As a result, we get a functor {\sf T} : {\sf 2TermDiffA$_\infty$} $\rightarrow$ {\sf DiffAss2}. 

\medskip

Thus, we are now left to show the existence of two natural isomorphisms $\alpha : {\sf TS} \Rightarrow  1_{\sf DiffAss2}$ and $\beta: {\sf ST} \Rightarrow 1_{\sf 2TermDiffA_\infty }$. Let $(C = (C , \bullet, \mathcal{A}), D, \mathcal{D})$ be a difference associative $2$-algebra. By applying the functor {\sf S}, we suppose that
\begin{align*}
    {\sf S} (C, D, \mathcal{D}) := ((A_1 \xrightarrow{\delta} A_0, \odot, \mu ), (d_0, d_1, d_2) )
\end{align*}
is a $2$-term difference $A_\infty$-algebra, where the structure operations are given in (\ref{some1})-(\ref{some2}). By applying the functor ${\sf T}$ to this structure, we obtain a new difference associative $2$-algebra, say $(C' =  (C' , \bullet', \mathcal{A}'), D', \mathcal{D}')$. Then we have
\begin{align*}
    &C_0'= A_0 = C_0, \quad C_1'= A_0 \oplus A_1 = C_0 \oplus \mathrm{ker}(s),\\
    &s' (x, h) = x, \quad t'(x, h) = x + \delta (h) = x + t (h), \quad i_x = (x, 0),\\
    &x \bullet' y = x \odot y =  x \bullet y, \quad (x, h) \bullet' (y, k) = ( x \bullet y ~ \! , ~ \! i_x k + h \bullet i_y + i_{\delta (h)} \bullet k), \\
    &D_0'(x) = D_0 (x), \quad D_1'(x, h) = (D_0 (x) , D_1 (h)) ~~~ \text{ and } ~~~ \mathcal{D}'_{x, y} = \big( D_0 (x) \bullet D_0 (y) ~ \!, ~\! d_2 (x, y)    \big) = \mathcal{D}_{x, y},
\end{align*}
for $x, y \in C_0' = C_0$ and $(x, h), (y, k) \in C_1' = C_0 \oplus \mathrm{ker} (s)$. It is easy to see that 
\begin{align*}
    \alpha_C = \big(  ( \alpha_C)_0, ( \alpha_C)_1, ( \alpha_C)_2, ( \alpha_C)_3   \big) : (C', D', \mathcal{D}') \rightarrow (C, D, \mathcal{D})
\end{align*}
is an isomorphism of difference associative $2$-algebras, where $ ( \alpha_C)_0 (x) := x$, $ ( \alpha_C)_1 (x, h) := i_x + h$ and $ ( \alpha_C)_2$, $ ( \alpha_C)_3$ are defined to be the identity maps. This construction leads to a natural isomorphism $\alpha : {\sf TS} \Rightarrow  1_{\sf DiffAss2}$.

\medskip

On the other hand, let $(\mathfrak{a}, \mathfrak{d}) = ((A_1 \xrightarrow{\delta} A_0, \odot, \mu ), (d_0, d_1, d_2) )$ be a $2$-term difference $A_\infty$-algebra. After applying the functor $\mathsf{T}$, we assume that the difference associative $2$-algebra $\mathsf{T} (\mathfrak{a}, \mathfrak{d})$ is given by
\begin{align*}
    \mathsf{T} (\mathfrak{a}, \mathfrak{d}) = (C = (C, \bullet, \mathcal{A}), D, \mathcal{D}).
\end{align*}
By further applying the functor $\mathsf{S}$ to this structure, we obtain a $2$-term difference $A_\infty$-algebra, say $(\mathfrak{a}', \mathfrak{d}')$. Then it is easy to see that $(\mathfrak{a}', \mathfrak{d}')$ is exactly same with $(\mathfrak{a}, \mathfrak{d})$. Thus, we have the identity isomorphism
\begin{align*}
    \beta_\mathfrak{a} = \big( (\beta_\mathfrak{a})_0, (\beta_\mathfrak{a})_1, (\beta_\mathfrak{a})_2, (\beta_\mathfrak{a})_3 \big) : (\mathfrak{a}', \mathfrak{d}') \rightarrow (\mathfrak{a}, \mathfrak{d})
\end{align*}
of $2$-term difference $A_\infty$-algebras, where $(\beta_\mathfrak{a})_0 (x) = x$, $(\beta_\mathfrak{a})_1 (h) = h$ and $(\beta_\mathfrak{a})_2$, $(\beta_\mathfrak{a})_3$ are trivial maps. This yields a natural isomorphism $\beta: {\sf ST} \Rightarrow 1_{\sf 2TermDiffA_\infty }$. Hence, the proof is done.
\end{proof}

\medskip

\begin{defn}\label{defi-sk-st}
Let $(\mathfrak{a}, \mathfrak{d}) = (( A_1 \xrightarrow{ \delta} A_0, \odot, \mu ) , (d_0, d_1, d_2) )$ be a $2$-term difference $A_\infty$-algebra. Then it is said to be 

- {\bf skeletal} if $\delta = 0$,

- {\bf strict} if $\mu = 0$ and $d_2 = 0$.
\end{defn}
\noindent The collection of all strict $2$-term difference $A_\infty$-algebras and difference $A_\infty$-homomorphisms between them is also a category, denoted by ${\sf S2TermDiffA_\infty }$. This turns out to be a subcategory of  ${\sf 2TermDiffA_\infty }$. Inspired by Theorem \ref{thm-main}, one can then prove the following result.

\begin{corollary}\label{cor-main}
The categories {\sf SDiffAss2} and ${\sf S2TermDiffA_\infty }$ are equivalent.
\end{corollary}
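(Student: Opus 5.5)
The plan is to obtain the equivalence by restricting the functors ${\sf S}$ and ${\sf T}$ from the proof of Theorem \ref{thm-main}, together with the natural isomorphisms $\alpha : {\sf TS} \Rightarrow 1_{\sf DiffAss2}$ and $\beta : {\sf ST} \Rightarrow 1_{\sf 2TermDiffA_\infty}$, to the full subcategories ${\sf SDiffAss2}$ and ${\sf S2TermDiffA_\infty}$. Both subcategories are full, since their morphisms are all homomorphisms of the ambient categories. It therefore suffices to check two things on objects: ${\sf S}$ sends strict difference associative $2$-algebras to strict $2$-term difference $A_\infty$-algebras, and ${\sf T}$ sends strict ones to strict ones. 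Functoriality, and the components of $\alpha$ and $\beta$, are then inherited without further work, because a restriction of an equivalence to full subcategories that the functors preserve is again an equivalence.

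First I treat ${\sf S}$. If $\mathcal{A}$ and $\mathcal{D}$ are identity isomorphisms, then every $\mathcal{A}_{x,y,z}$ and $\mathcal{D}_{x,y}$ equals $i_w$ for its source $w$. Hence their arrow parts vanish, and by (\ref{some1})--(\ref{some2}) we get $\mu(x,y,z) = \overrightarrow{\mathcal{A}_{x,y,z}} = 0$ and $d_2(x,y) = \overrightarrow{\mathcal{D}_{x,y}} = 0$. So ${\sf S}(C,D,\mathcal{D})$ is strict in the sense of Definition \ref{defi-sk-st}.

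Next I treat ${\sf T}$. If $\mu = 0$ and $d_2 = 0$, then the construction in the proof of Theorem \ref{thm-main} gives $\mathcal{A}_{x,y,z} = ((x\bullet y)\bullet z, 0) = i_{(x\bullet y)\bullet z}$ and $\mathcal{D}_{x,y} = (D_0(x\bullet y), 0) = i_{D_0(x \bullet y)}$. By (\ref{a4}) and (\ref{d1}), source and target now coincide, so these are genuinely identity morphisms. Hence ${\sf T}(\mathfrak{a},\mathfrak{d})$ lies in ${\sf SDiffAss2}$.

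Finally, the components $\alpha_C$ and $\beta_{\mathfrak{a}}$ are isomorphisms in the ambient categories between objects of the subcategories. Since the subcategories are full, these components are morphisms of the subcategories, and naturality holds because it already holds in the ambient categories.

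The only point needing care is the fullness of the subcategories. If one instead intended strict homomorphisms ($F_2$, $F_3$ identities and $\varphi_2 = \varphi_3 = 0$), the same restriction would still work: ${\sf S}$ and ${\sf T}$ visibly send identity natural transformations to zero maps and conversely, and the components $\alpha_C$ and $\beta_{\mathfrak{a}}$ have identity, respectively trivial, higher parts. I expect no genuine obstacle beyond this bookkeeping.
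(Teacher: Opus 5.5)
Your proposal is correct and is the argument the paper intends. The paper gives no separate proof beyond saying the corollary follows from Theorem \ref{thm-main}, and restricting ${\sf S}$, ${\sf T}$, $\alpha$, $\beta$ to the full subcategories of strict objects is precisely that; your object-level checks supply the details the paper leaves implicit (identity $\mathcal{A}$, $\mathcal{D}$ have vanishing arrow parts, so $\mu = d_2 = 0$; conversely, the $\mathcal{A}$, $\mathcal{D}$ built from $\mu = d_2 = 0$ are identities by (\ref{a4}) and (\ref{d1})).
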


All the categories and the equivalences considered above can be summarized as
\[
\xymatrix{
{\sf DiffAss2} \ar[rrr]_\cong^{\mathrm{Theorem }~  \ref{thm-main}} & & & {\sf 2TermDiffA_\infty } \\
{\sf SDiffAss2} \ar@{^{(}->}[u] \ar[rrr]^\cong_{\mathrm{Corollary }~  \ref{cor-main} } & & & {\sf S2TermDiffA_\infty.} \ar@{^{(}->}[u]
}
\]

\medskip

\section{Characterizations of some difference $A_\infty$-algebras and $2$-term bimodule up to homotopy}\label{sec5}

In this section, we first characterize skeletal and strict $2$-term difference $A_\infty$-algebras by respectively third cocycles and crossed modules of difference algebras. Next, we define the notion of a $2$-term bimodule up to homotopy over an associative algebra, that provides a $2$-term $A_\infty$-algebra as the corresponding semidirect product. Subsequently, we extend this construction to the context of difference algebras.

\begin{thm}\label{thm-skeletal}
    There is a 1-1 correspondence between skeletal $2$-term difference $A_\infty$-algebras and triplets $((A, d), (M, \Delta), (\mu, \chi))$ consisting of a difference algebra $(A, d)$, a bimodule $(M, \Delta )$ over it and a $3$-cocycle $(\mu, \chi) \in Z^3((A, d); (M, \Delta) )$.
\end{thm}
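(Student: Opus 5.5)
Given a skeletal $2$-term difference $A_\infty$-algebra $((A_1 \xrightarrow{0} A_0, \odot, \mu), (d_0, d_1, d_2))$, we set $A := A_0$, $M := A_1$, $d := d_0$, $\Delta := d_1$, and take the pair $(\mu, \chi)$ with $\chi := d_2$ up to a sign fixed below. Conversely, from a triple $((A,d),(M,\Delta),(\mu,\chi))$ we put $A_0 = A$, $A_1 = M$, $\delta = 0$, let $\odot$ be the product on $A$ together with the two actions of $A$ on $M$, and take $\mu$ and $d_2 := \pm \chi$. These assignments are evidently mutually inverse. So the whole content is the claim that, when $\delta=0$, the identities (A1)--(A8) and (D1)--(D4) are exactly equivalent to the conditions in the triple. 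The plan is to sort the identities into three groups and translate each group in turn.

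\textbf{Algebra and bimodule.} With $\delta = 0$, identities (A1)--(A3) and (A5)--(A7) become vacuous or trivially true, since $\mu(\ldots,\delta(h),\ldots)=0$. Identity (A4) says $A_0$ is an associative algebra. Identities (A5)--(A7) say that $A_1$ is an $A_0$-bimodule, since their right-hand sides vanish. Identity (A8) says exactly $\delta_{\mathrm{Hoch}}(\mu) = 0$, the Hochschild cocycle condition for $\mu \in \mathrm{Hom}(A^{\otimes 3}, M)$.

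\textbf{Difference operators.} Identity (D1) with $\delta=0$ is precisely the difference-operator identity for $d_0$, so $(A,d)$ is a difference algebra. Identities (D2) and (D3) have vanishing right-hand sides and are precisely the two conditions making $(M,\Delta)$ a bimodule over $(A,d)$. The compatibility $d_0\circ\delta=\delta\circ d_1$ is automatic.

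\textbf{The cocycle condition.} What remains is to identify (D4) with the second component of $\delta_{\mathrm{Diff}}(\mu,\chi) = 0$ in degree $n=3$. By the definition of $\delta_{\mathrm{Diff}}$, this component is $\delta^d_{\mathrm{Hoch}}(\chi) - \partial^{d,\Delta}(\mu)$, with $\chi \in \mathrm{Hom}(A^{\otimes 2}, M)$. Expanding $\delta^d_{\mathrm{Hoch}}(\chi)(x,y,z)$ in the bimodule $M^d$ gives
\[
(x+d(x))\chi(y,z) - \chi(xy,z) + \chi(x,yz) - \chi(x,y)(z+d(z)).
\]
This is exactly the left-hand side of (D4) with $\chi = d_2$. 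The term $\partial^{d,\Delta}(\mu)(x,y,z)$ is the sum over the seven nonempty subsets where $d$ is inserted, minus $\Delta(\mu(x,y,z))$, which is exactly the right-hand side of (D4). So (D4) reads $\delta^d_{\mathrm{Hoch}}(d_2) = \partial^{d,\Delta}(\mu)$, that is, $\delta^d_{\mathrm{Hoch}}(\chi) - \partial^{d,\Delta}(\mu) = 0$, and $\chi = d_2$ works with no sign change. Together with (A8), this is precisely the statement $(\mu,\chi) \in Z^3((A,d);(M,\Delta))$.

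The main obstacle I expect is this final sign bookkeeping. One has to pin down the sign convention of $\delta_{\mathrm{Hoch}}$ used in \cite{guo-li} (the $(-1)^n$ factor and the ordering of the outer terms) so that it matches (A8) and (D4) on the nose. If the conventions disagree, I would redefine $\chi := -d_2$ or $\chi := (-1)^{n} d_2$ accordingly. The bijection is unaffected by such a sign change.
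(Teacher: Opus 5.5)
Your proposal is correct and follows the paper's own proof essentially verbatim. With $\delta=0$, (A4) and (D1) give the difference algebra, (A5)--(A7) and (D2)--(D3) give the bimodule, and (A8) and (D4) are the two components of $\delta_{\mathrm{Diff}}(\mu,d_2)=0$, with $\chi=d_2$ and no sign change, exactly as you verified.

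One wording slip: (A5)--(A7) are not ``trivially true'' when $\delta=0$. Only their right-hand sides vanish, which leaves precisely the bimodule axioms, as your next paragraph correctly uses.
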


\begin{proof}
    Let $(( A_1 \xrightarrow{\delta = 0} A_0, \odot, \mu), (d_0, d_1, d_2) )$ be a skeletal $2$-term difference $A_\infty$-algebra. Since $\delta = 0$, it follows from the identities (\ref{a4}), (\ref{d1}) that $(A_0 = (A_0, \odot), d_0)$ is a difference algebra. On the other hand, the identities (\ref{a5}), (\ref{a6}), (\ref{a7}), (\ref{d2}) and (\ref{d3}) imply that $(A_1, d_1)$ is a bimodule over the difference algebra $(A_0, d_0)$ obtained above. Finally, the identity (\ref{a8}) simply means that $\delta_\mathrm{Hoch} (\mu ) (x, y, z, t ) = 0$, while the identity (\ref{d4}) can be equivalently understood as $\big( \delta_\mathrm{Hoch}^{d_0} (d_2) - \partial^{d_0, d_1} (\mu) \big) (x, y, z) = 0$. Here $\delta_\mathrm{Hoch}$ is the Hochschild coboundary operator of the associative algebra $A_0$ with coefficients in the $A_0$-bimodule $A_1$. Hence, we have
    \begin{align*}
        \delta_\mathrm{Diff} (\mu, d_2) = \big(  \delta_\mathrm{Hoch} (\mu) ~ \!, ~ \! \delta_\mathrm{Hoch}^{d_0} (d_2) - \partial^{d_0, d_1} (\mu)  \big) = 0
    \end{align*}
    which shows that $(\mu, d_2) \in Z^3_{\mathrm{Diff}} ((A_0, d_0); (A_1, d_1) )$ is a $3$-cocycle. Thus, we obtain the required triple $((A_0, d_0), (A_1, d_1), (\mu, d_2) ).$

    \medskip

    Conversely, suppose there is a triple $((A, d), (M, \Delta), (\mu, \chi) )$ of a difference algebra, a bimodule over it and a $3$-cocycle. Then it is easy to see that
    \begin{align*}
         ((M \xrightarrow{\delta = 0} A, \odot, \mu ), (d_0 = d, d_1 = \Delta, d_2= \chi) )
    \end{align*}
    is a skeletal $2$-term difference $A_\infty$-algebra, where the bilinear map $\odot$ is given by
    \begin{align*}
        a \odot b = ab, \quad a \odot u = au ~~~ \text{ and } ~~~ u \odot a = ua, \text{ for } a, b \in A, ~ \! u \in M.
    \end{align*}
    The above two constructions are inverses of each other. This completes the proof.
\end{proof}

Next, we shall consider crossed modules of difference algebras and their correspondence with strict $2$-term difference $A_\infty$-algebras.

\begin{defn}
    A {\bf crossed module of difference algebras} is a triple $((A, d), (A', d'), \partial )$, where
    \begin{itemize}
        \item[-] $(A, d)$ and $(A', d')$ are both difference algebras, and $(A', d')$ is also a bimodule over the difference algebra $(A, d)$,   
        \item[-] $\partial : A' \rightarrow A$ is a homomorphism of difference algebras satisfying additionally
    \end{itemize}
    \begin{align}
        &a (hk) = (ah) k, \qquad h (ak) = (ha)k, \qquad h (ka) = (hk) a, \label{crm1}\\
        & \partial (ah) = a \partial (h), \quad \partial (ha ) = \partial (h) a, \quad  \partial (h) k = h \partial (k) = hk, \label{crm2}
    \end{align}
    for all $a \in A$ and $h, k \in A'$. Here, for simplicity, we denote the associative multiplications of $A$ and $A'$, and also the left and right $A$-actions on $A'$ by the juxtaposition.
\end{defn}

\begin{exam}
(i) Let $(A, d)$ be a difference algebra. Then $((A, d), (A, d), \partial = \mathrm{Id}_A )$ is a crossed module of difference algebras.

\medskip

(ii) Let $(A, A', \partial)$ be a crossed module of associative algebras \cite{das-hom}. Then it is easy to see that the triple $((A, d = - \mathrm{Id}_A), (A' , d' = - \mathrm{Id}_{A'}), \partial )$ is a crossed module of difference algebras.
\end{exam}

\begin{thm}\label{thm-strict}
    There is a 1-1 correspondence between strict $2$-term difference $A_\infty$-algebras and crossed modules of difference algebras.
\end{thm}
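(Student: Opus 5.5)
The plan is to write down the two constructions explicitly and check that each is well defined; mutual inverseness is then immediate. This mirrors the argument for Theorem \ref{thm-skeletal}.

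\emph{From a strict $2$-term difference $A_\infty$-algebra to a crossed module.} Start with $((A_1 \xrightarrow{\delta} A_0, \odot, \mu=0), (d_0,d_1,d_2=0))$ and put $A := A_0$, $d := d_0$, $A' := A_1$, $d' := d_1$, $\partial := \delta$.

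1. Since $\mu = 0$, identity (\ref{a4}) says $(A_0,\odot)$ is associative. Since $d_2=0$, identity (\ref{d1}) says $d_0$ is a difference operator. So $(A_0,d_0)$ is a difference algebra.
2. Define the multiplication on $A_1$ by $hk := \delta(h)\odot k$, which equals $h\odot \delta(k)$ by (\ref{a3}). Associativity follows by moving $\delta$ around with (\ref{a1}), (\ref{a2}), (\ref{a3}) and using (\ref{a5})--(\ref{a7}) with $\mu=0$.
3. The actions $a h := a\odot h$ and $h a := h\odot a$ make $A_1$ an $A_0$-bimodule, by (\ref{a5})--(\ref{a7}) with $\mu=0$.
4. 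Identities (\ref{d2}) and (\ref{d3}) with $d_2=0$ make $(A_1,d_1)$ a bimodule over $(A_0,d_0)$.
5. For $d_1$ to be a difference operator on $(A_1, \cdot)$, compute $d_1(hk) = d_1(\delta(h)\odot k)$ and apply (\ref{d2}) with $x=\delta(h)$. This gives
\[
d_0\delta(h)\odot k + \delta(h)\odot d_1 k + d_0\delta(h)\odot d_1 k ,
\]
and $d_0\delta = \delta d_1$ turns it into $d_1(h)k + h d_1(k) + d_1(h)d_1(k)$.
6. For the crossed module axioms: $\partial=\delta$ is an algebra map because $\delta(\delta(h)\odot k)=\delta(h)\odot\delta(k)$ by (\ref{a1}). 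It commutes with the difference operators because $d_0\circ\delta=\delta\circ d_1$. Conditions (\ref{crm2}) are (\ref{a1}), (\ref{a2}), (\ref{a3}) together with the definition of the product. Conditions (\ref{crm1}) follow from (\ref{a5})--(\ref{a7}) after rewriting $hk=\delta(h)\odot k$ or $h\odot\delta(k)$ as convenient. For example, $h(ak) = \delta(h)\odot(a\odot k) = (\delta(h)\odot a)\odot k = \delta(h\odot a)\odot k = (ha)k$.

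\emph{From a crossed module to a strict $2$-term difference $A_\infty$-algebra.} Given $((A,d),(A',d'),\partial)$, take $A_1 := A'$, $A_0 := A$, $\delta := \partial$, $\mu := 0$, $d_2 := 0$. Let $\odot$ be the product of $A$ on $A_0$ and the bimodule actions on mixed terms.

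1. Identities (\ref{a1}), (\ref{a2}), (\ref{a3}) come from (\ref{crm2}).
2. (\ref{a4}) is associativity of $A$; (\ref{a5}) and (\ref{a7}) are bimodule axioms. (\ref{a6}) is the bimodule compatibility $a(hb)=(ah)b$. (\ref{a8}) is trivial.
3. $d\circ\partial=\partial\circ d'$ holds because $\partial$ is a difference homomorphism. (\ref{d1}) is the difference identity for $d$, and (\ref{d2}), (\ref{d3}) are the difference-bimodule axioms for $(A',d')$. (\ref{d4}) is trivial.

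\emph{Inverseness and the main obstacle.} The two constructions are inverse to each other. Going one way and back, the product on $A'$ is recovered as $\partial(h)k = hk$ by (\ref{crm2}), and all other data are unchanged.

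The main obstacle is step 5 of the first direction together with (\ref{crm1}). These require that the internal product on $A_1$ be defined through $\delta$ and that every $\delta$ be moved carefully between factors. Consistency of the two expressions $\delta(h)\odot k$ and $h\odot\delta(k)$ via (\ref{a3}) is what lets each verification choose the convenient form.
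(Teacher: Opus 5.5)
Your proposal is correct and follows the paper's own route. You read off the difference algebra $(A_0,d_0)$ from (\ref{a4}) and (\ref{d1}), define the product on $A_1$ by $\delta(h)\odot k=h\odot\delta(k)$, obtain the difference bimodule and crossed-module axioms from (\ref{a1})--(\ref{a7}) and (\ref{d2})--(\ref{d3}), and then reverse the construction. Your write-up is more explicit than the paper's on two points the paper leaves implicit: you use $d_0\circ\delta=\delta\circ d_1$ to show $d_1$ is a difference operator on $A_1$, and you note that (\ref{crm2}) is what recovers the product of $A'$ when checking the two constructions are mutually inverse.
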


\begin{proof}
    Let $(( A_1 \xrightarrow{\delta} A_0, \odot, \mu= 0) , (d_0, d_1, d_2 = 0) )$ be a strict $2$-term difference $A_\infty$-algebra. Since $\mu = 0$ and $d_2 = 0$, it follows from the identities (\ref{a4}) and (\ref{d1}) that $(A_0 = (A_0, \odot), d_0 )$ is a difference algebra. We now define a bilinear map $\odot_1 : A_1 \times A_1 \rightarrow A_1$ by 
    \begin{align*}
        h \odot_1 k := \delta (h) \odot k \stackrel{\text{(\ref{a3})} }{=} h \odot \delta (k), \text{ for } h , k \in A_1.
    \end{align*}
    This multiplication makes the vector space $A_1$ into an associative algebra (that can be seen from either of the identities (\ref{a5}), (\ref{a6}) or (\ref{a7})). Moreover, either of (\ref{d2}) or (\ref{d3}) implies that the linear map $d_1: A_1 \rightarrow A_1$ is a difference operator on $A_1$, and hence $(A_1, d_1)$ is a difference algebra. Further, from (\ref{a1}) or (\ref{a2}), and the condition $d_0 \circ \delta = \delta \circ d_1$, we get that $\delta: A_1 \rightarrow A_0$ is a homomorphism of difference algebras. Next, we consider the bilinear maps
    \begin{align*}
        A_0 \times A_1 \rightarrow A_1, ~ (x, h) \mapsto x \odot h \quad \text{ and } \quad A_1 \times A_0 \rightarrow A_1, ~ (h , x) \mapsto h \odot x.
    \end{align*}
    Then it follows from (\ref{a5})-(\ref{a7}) and (\ref{d2})-(\ref{d3}) that the above bilinear maps make the vector space $A_1$ into an $A_0$-bimodule, and $(A_1, d_1)$ into a bimodule over the difference algebra $(A_0, d_0)$. Moreover, it is easy to see that the identities in (\ref{crm1}) and (\ref{crm2}) hold. Hence $((A_0, d_0 ), (A_1, d_1), \delta )$ is a crossed module of difference algebras.

    \medskip

    Conversely, let $((A, d), (A', d'), \partial)$ be a crossed module of difference algebras. Then one can easily verify that
    \begin{align*}
        ((A' \xrightarrow{\partial} A, \odot, \mu= 0 ) , (d_0 = d, d_1 = d', d_2 = 0) )
    \end{align*}
    is a strict $2$-term difference $A_\infty$-algebra, where $\odot$ is defined by
    \begin{align*}
        a \odot b = a b, \quad a \odot h = a h ~~~ \text{ and } ~~~ h \odot a = h a, \text{ for } a, b \in A, ~ \! h \in A'.
    \end{align*}
    Finally, the above two correspondences are inverses of each other. This proves the result.
\end{proof}

We will now consider a $2$-term bimodule up to homotopy of an associative algebra, and then over a difference algebra. We also discuss the corresponding semidirect products.

\begin{defn}
    Let $A$ be an associative algebra. Then a {\bf $2$-term bimodule up to homotopy} of $A$ consists of a $2$-term complex $M_1 \xrightarrow{\delta} M_0$ endowed with four bilinear maps (all being denoted by juxtaposition)
    \begin{align*}
        A \times M_0 &\rightarrow M_0, \qquad M_0 \times A \rightarrow M_0, \qquad A \times M_1 \rightarrow M_1, \qquad M_1 \times A \rightarrow M_1 \\
        (a, v) & \mapsto av \quad \qquad \quad (v, a) \mapsto va, ~~ \quad \quad \quad (a, \xi) \mapsto a \xi \qquad \qquad (\xi, a) \mapsto \xi a
    \end{align*}
    compatible with $\delta$ in the sense that
    \begin{align}
        \delta (a \xi) = a \delta (\xi) ~~~ \text{ and } ~~~ \delta (\xi a) = \delta (\xi) a, \text{ for } a \in A, ~ \! \xi \in M_1,
    \end{align}
    and three trilinear maps (all being denoted by the same notation $\nu$)
    \begin{align*}
        \nu: A \times A \times M_0 \rightarrow M_1, \qquad \nu : A \times M_0 \times A \rightarrow M_1, \qquad \nu : M_0 \times A \times A \rightarrow M_1 
    \end{align*}
    such that for all $a, b, c \in A$ and $v \in M_0$, ~$\xi \in M_1$,
    \begin{align*}
\begin{cases}
    a (b v) - (ab) v = \delta (\nu (a, b, v)),\\
    a ( v b) - (a v) b = \delta (\nu (a, v, b)),\\
    v (a b) - (v a) b = \delta (\nu (v, a, b)),
\end{cases}
\qquad 
\begin{cases}
    a (b \xi) - (a b) \xi = \nu (a, b, \delta (\xi)), \\
    a (\xi b) - (a \xi) b = \nu (a, \delta (\xi), b), \\
    \xi (a b) - (\xi a) b = \nu (\delta (\xi), a, b),
\end{cases}
    \end{align*}
    and
    \begin{align}
        &a ~ \! \nu (b,c, v) - \nu (ab, c, v) + \nu (a, bc, v) - \nu (a, b, c v) = 0, \label{id-nu1}\\
        &a ~  \! \nu (b, v, c) - \nu (ab, v, c) + \nu (a, bv, c) - \nu (a, b, v c) + \nu (a, b, v) ~ \! c = 0, \label{id-nu2}\\
        & a ~ \! \nu (v, b, c) - \nu (a v, b, c) + \nu (a, v b, c) - \nu (a, v, bc) + \nu (a, v, b) ~ \! c =0, \label{id-nu3}\\
        & ~ ~ \qquad \qquad \quad \nu (v a, b, c) - \nu (v, ab, c) + \nu (v, a, b c) - \nu (v, a, b) ~ \! c = 0. \label{id-nu4}
    \end{align}
    We often denote a $2$-term bimodule up to homotopy as above by $(M_1 \xrightarrow{\delta} M_0, \nu)$.
\end{defn}

\begin{thm}\label{thm-2-hom}
    Let $A$ be an associative algebra and $(M_1 \xrightarrow{\delta} M_0, \nu)$ be a $2$-term bimodule up to homotopy of $A$. Then the triple
    \begin{align*}
       \mathfrak{a} :=  ( M_1 \xrightarrow{\widetilde{\delta } } A \oplus M_0 ~ \! , ~ \! \odot, \mu )
    \end{align*}
    is a $2$-term $A_\infty$-algebra, where $\widetilde{\delta} (\xi) = (0, \delta (\xi) )$, and
    \begin{align*}
       & (a, u) \odot (b, v) := (ab ~ \! , ~ \! a v + u b), \quad (a, u) \odot \xi := a \xi, \quad \xi \odot (a, u) := \xi a, \quad \xi \odot \eta := 0,\\
       & \qquad \qquad \quad  \mu ( (a , u) , (b ,v) , (c , w ) ) := \nu (a, b, w) + \nu (a, v, c) + \nu (u, b, c),
    \end{align*}
    for all $(a, u), (b, v), (c, w) \in A \oplus M_0$ and $\xi, \eta \in M_1$. (This is called the {\bf semidirect product} of $A$ with the given $2$-term bimodule up to homotopy).
\end{thm}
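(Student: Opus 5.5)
The plan is to check the eight identities (\ref{a1})--(\ref{a8}) of Definition \ref{defi-2term-ainf} one at a time for $A_0 = A \oplus M_0$, $A_1 = M_1$, the differential $\widetilde{\delta}$, and the given $\odot$ and $\mu$. Every structure map is (multi)linear, and $\mu$ is defined by placing at most one $M_0$-entry among $A$-entries. So each identity splits into components according to which slot carries the module variable, and each component should reduce to one of the axioms of a $2$-term bimodule up to homotopy.

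\textbf{Step 1: the identities involving $\widetilde{\delta}$.}
\begin{itemize}
\item For (\ref{a1}) and (\ref{a2}): $\widetilde{\delta}((a,u)\odot\xi) = (0,\delta(a\xi)) = (0, a\delta(\xi))$, and $(a,u)\odot(0,\delta\xi) = (0, a\delta\xi)$. These agree by the compatibility $\delta(a\xi)=a\delta(\xi)$; the right-hand version is symmetric.
\item For (\ref{a3}): $\widetilde{\delta}(\xi)\odot\eta = (0,\delta\xi)\odot\eta = 0\cdot\eta = 0$, and likewise $\xi\odot\widetilde{\delta}(\eta)=0$. So both sides vanish, consistent with $\xi\odot\eta=0$.
\end{itemize}

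\textbf{Step 2: the associator identities (\ref{a4})--(\ref{a7}).}
\begin{itemize}
\item For (\ref{a4}), expand $(a,u)\odot((b,v)\odot(c,w)) - ((a,u)\odot(b,v))\odot(c,w)$. The $A$-component is $a(bc)-(ab)c = 0$. The $M_0$-component is
\[
\bigl(a(bw)-(ab)w\bigr) + \bigl(a(vc)-(av)c\bigr) + \bigl(u(bc)-(ub)c\bigr).
\]
By the first group of homotopy axioms this equals $\delta(\nu(a,b,w)+\nu(a,v,c)+\nu(u,b,c))$, which is $\widetilde{\delta}\mu$.
\item For (\ref{a5})--(\ref{a7}), one argument is an element $\xi\in M_1$. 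Since $(a,u)\odot\xi=a\xi$ only sees the $A$-part, the left sides reduce to $a(b\xi)-(ab)\xi$, $a(\xi b)-(a\xi)b$ and $\xi(ab)-(\xi a)b$.
\item On the right sides, $\widetilde{\delta}\xi=(0,\delta\xi)$ has zero $A$-part. Hence in $\mu$ only the term with $\delta\xi$ in the module slot survives, for example $\mu((a,u),(b,v),(0,\delta\xi)) = \nu(a,b,\delta\xi)$. These match the second group of axioms exactly.
\end{itemize}

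\textbf{Step 3: the pentagon-type identity (\ref{a8}).} This is the main bookkeeping step. Write $x=(a,u)$, $y=(b,v)$, $z=(c,w)$, $t=(e,s)$. Each term of (\ref{a8}) is linear in the single module variable that appears, so the sum splits into four pieces, according to whether $u$, $v$, $w$ or $s$ carries $M_0$.
\begin{itemize}
\item In the $s$-piece, $x\odot\mu(y,z,t)$ contributes $a\,\nu(b,c,s)$, and $\mu(x,y,z)\odot t$ contributes nothing, because $\mu(x,y,z)\in M_1$ is acted on only by the $A$-part $e$ and has no $s$-dependence. The remaining three terms give $-\nu(ab,c,s)+\nu(a,bc,s)-\nu(a,b,cs)$. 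This is exactly (\ref{id-nu1}).
\item The $w$-piece gives (\ref{id-nu2}) and the $v$-piece gives (\ref{id-nu3}).
\item In the $u$-piece, $x\odot\mu(y,z,t)=a\,\nu(b,c,e)$ is independent of $u$ and drops out, which explains why (\ref{id-nu4}) has no leading term. This piece gives (\ref{id-nu4}).
\end{itemize}

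The only care needed is in the multiplications $x\odot y$ inside $\mu$. For example, $\mu(x\odot y,z,t)$ has module entry $av+ub$ in the first slot, and this splits correctly between the $u$-piece and the $v$-piece. Tracking these splittings is the main obstacle, but it is routine once the decomposition by module slot is fixed. All eight identities then hold, so $\mathfrak{a}$ is a $2$-term $A_\infty$-algebra.
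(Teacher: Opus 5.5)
Your proposal is correct and follows the paper's proof essentially step for step: (\ref{a1})--(\ref{a3}) come directly from the compatibility of $\delta$ with the actions, (\ref{a4})--(\ref{a7}) from the two groups of homotopy relations, and (\ref{a8}) from splitting into four pieces according to which slot carries the $M_0$-entry, which are exactly (\ref{id-nu1})--(\ref{id-nu4}). One slip: in the $u$-piece the expression $a\,\nu(b,c,e)$ is undefined, since all three entries lie in $A$; the intended remark is that $x\odot\mu(y,z,t)=a\,\mu(y,z,t)$ does not involve $u$, so it contributes nothing to that piece.
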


\begin{proof}
    Let $(a, u), (b, v), (c, w) \in A \oplus M_0$ and $\xi, \eta \in M_1$. Then we have
    \begin{align*}
        &\widetilde{\delta} ((a, u) \odot \xi )  = (0, \delta (a \xi)) = (0, a \delta (\xi)) = (a, u) \odot (0, \delta (\xi)) = (a, u) \odot \widetilde{ \delta} (\xi), \\
        &\widetilde{\delta} (\xi \odot (a, u)) = ( 0 , \delta (\xi a)) = (0, \delta (\xi) a) = (0, \delta (\xi)) \odot (a, u) = \widetilde{\delta} (\xi) \odot (a, u), \\
        & \qquad \qquad \widetilde{\delta} (\xi) \odot \eta = (0, \delta (\xi)) \odot \eta = 0 = \xi \odot (0, \delta (\eta)) = \xi \odot \widetilde{\delta} (\eta).
    \end{align*}
    Hence, the identities (\ref{a1}), (\ref{a2}), (\ref{a3}) hold. We also have
    \begin{align*}
        &(a, u) \odot (( b, v) \odot (c, w)) - ((a, u) \odot (b, v)) \odot (c, w) \\
       & = (0 ~ \! , ~ \! a (bw) - (ab) w + a (v c) - (av) c + u (bc) - (ub) c) \\
       & = \big(  0 ~ \! , ~ \! \delta ( \nu (a, b, w)  + \nu (a, v, c) + \nu (u, b, c) )   \big) = \widetilde{\delta} \big(   \mu ((a, u), (b, v), (c, w))  \big),
    \end{align*}
    \begin{align*}
        (a, u) \odot (( b, v) \odot \xi ) - ((a, u) \odot (b, v)) \odot \xi = a (b \xi) - (ab) \xi 
        = \nu (a, b, \delta (\xi)) 
        =~& \mu (( a, u), (b, v), (0, \delta (\xi)) ) \\
        =~& \mu ((a, u), (b, v), \widetilde{\delta} (\xi)).
    \end{align*}
    Hence, the identities (\ref{a4}) and (\ref{a5}) also follow. Similarly, one can verify the identities (\ref{a6}) and (\ref{a7}). Finally, for any $(a, u), (b, v), (c, w), (c', w') \in A \oplus M_0$, we observe that
    \begin{align*}
        &(a, u) \odot \mu (    (b, v) , (c, w), (c', w')) - \mu ( (a, u) \odot (b, v), (c, w), (c', w')) + \mu (   (a, u), (b, v) \odot (c, w) , (c', w')) \\
        & \qquad \qquad - \mu ( (a, u), (b, v) , (c, w) \odot (c', w')) + \mu ( (a, u), (b, v), (c, w)) \odot (c', w') \\
&= \big\{ a ~ \! \nu (b,c, w') - \nu (ab, c, w') + \nu (a, bc, w') - \nu (a, b, cw') \big\} \\
& \quad  + \big\{   a ~ \! \nu (b, w, c') - \nu (ab, w, c') + \nu (a, bw , c') - \nu (a, b, wc') + \nu (a, b, w) ~  \!  c' \big\} \\
& \quad + \big\{  a ~\! \nu (v, c, c')  - \nu (av, c, c')   + \nu (a, v c , c') - \nu (a, v, cc')  + \nu (a, v, c) ~ \! c' \big\} \\
& \quad - \big\{ \nu (ub, c, c' ) - \nu (u, bc, c') + \nu (u, b, cc') - \nu (u, b, c) ~ \! c'  \big\} \\
        &= 0 \quad (\text{by } (\ref{id-nu1})\text{-}(\ref{id-nu4}))
    \end{align*}
    which verifies the identity (\ref{a8}). This completes the proof.
\end{proof}

\begin{defn}
    Let $(A, d)$ be a difference algebra. Then a {\bf $2$-term bimodule up to homotopy} over $(A, d)$ is a tuple $((M_1 \xrightarrow{\delta} M_0, \nu), \Delta_0, \Delta_1, \theta)$, where 

    \medskip

    - $(M_1 \xrightarrow{\delta} M_0, \nu)$ is a $2$-term bimodule up to homotopy of the associative algebra $A$,

    \medskip

    - $\Delta_0 : M_0 \rightarrow M_0$ and $\Delta_1 : M_1 \rightarrow M_1$ are linear maps satisfying $\Delta_0 \circ \delta = \delta \circ \Delta_1$,

    \medskip

    - there are two bilinear maps (both being denoted by the same notation $\theta$)
\begin{align*}
        \theta : A \times M_0 \rightarrow M_1 \qquad \text{ and } \qquad \theta : M_0 \times A \rightarrow M_1
    \end{align*}
    such that for all $a, b \in A$, $v \in M_0$ and $\xi \in M_1$, the following identities hold:
    \begin{align}
        d(a) v +a \Delta_0 (v) + d(a) \Delta_0 (v) - \Delta_0 (av) =~& \delta (\theta (a, v)), \label{condi1}\\
        \Delta_0 (v) a + v d(a) + \Delta_0 (v) d(a) - \Delta_0 (va) =~& \delta (\theta (v, a)),\label{condi2}\\
        d(a) \xi + a \Delta_1 (\xi) + d(a) \Delta_1 (\xi) - \Delta_1 (a \xi) =~& \theta (a, \delta (\xi)), \label{condi3}\\
        \Delta_1 (\xi) a + \xi d(a) + \Delta_1 (\xi) d(a) - \Delta_1 (\xi a) =~& \theta (\delta (\xi), a), \label{condi4}
\end{align}
        \begin{align}
        (a+ d(a)) \theta (b, v) - \theta (ab, v) + \theta (a, bv) =~& \nu (d(a), b, v) + \nu (a, d(b), v) + \nu (a, b, \Delta_0 (v)) \nonumber \\
        + \nu (d(a), d(b), v) + \nu (d (a), b, \Delta_0 (v))  ~&+~ \nu ( a, d(b), \Delta_0 (v)) + \nu (d (a), d (b), \Delta_0 (v)) - d_1 (\nu (a, b, v)),\label{condi5}
\end{align}
\begin{align}
        (a + d(a)) \theta (v, b) - \theta (a v, b) + \theta (a, v b) - \theta (a, v) (b + d(b))=~& \nu (d(a), v, b) + \nu (a, \Delta_0 (v), b) + \nu (a, v, d(b)) \nonumber \\
        + \nu (d (a), \Delta_0 (v), b) + \nu (d (a), v, d (b) ) + \nu (a , \Delta_0 (v), d(b) )  &+ \nu (d (a), \Delta_0 (v), d(b)) -d_1 (\nu (a, v, b)), \label{condi6}
\end{align}
\begin{align}
        - \theta (va, b) + \theta (v, ab) - \theta (v, a) (b+ d(b)) =~& \nu (\Delta_0 (v), a, b) + \nu (v, d(a), b) + \nu (v, a, d(b)) \nonumber \\
        + \nu (\Delta_0 (v), d (a), b) + \nu (\Delta_0 (v), a, d (b) ) &+ \nu (v, d(a), d(b) ) + \nu (\Delta_0 (v), d(a), d(b) ) - d_1 (\nu (v, a, b)). \label{condi7}
    \end{align}
\end{defn}

\medskip

Let $((M_1 \xrightarrow{\delta} M_0, \nu), \Delta_0, \Delta_1, \theta )$ be a $2$-term bimodule up to homotopy over a difference algebra $(A, d)$. Then this $2$-term bimodule up to homotopy is said to be {\em skeletal} if $\delta = 0$. Equivalently, this says that $(M_0, \Delta_0)$ and $(M_1, \Delta_1)$ are both bimodules over the difference algebra $(A, d)$, and $\nu, \theta$ satisfies the identities (\ref{id-nu1})-(\ref{id-nu4}) and (\ref{condi5})-(\ref{condi7}). On the other hand, it is said to be {\em strict} if $\nu = 0$ and $\theta = 0$. This is equivalent to mean that $(M_0, \Delta_0),$ $ ( M_1, \Delta_1)$ are both bimodules over the difference algebra $(A, d)$, and $\delta: M_1 \rightarrow M_0$ is a homomorphism of $(A,d)$-bimodules.

\begin{prop}\label{prop-2-diff-hom}
    Let $(A, d)$ be a difference algebra and $( (M_1 \xrightarrow{ \delta} M_0, \nu ), \Delta_0, \Delta_1, \theta)$ be a $2$-term bimodule up to homotopy over it. Then 
    \begin{align*}
        (\mathfrak{a}, \mathfrak{d}) = \big(   ( M_1 \xrightarrow{\widetilde{\delta } } A \oplus M_0 ~ \! , ~ \! \odot, \mu ), (d_0 = d \oplus \Delta_0 ~ \! , ~ \! d_1= \Delta_1 ~ \!, ~ \! d_2)   \big)
    \end{align*}
    is a $2$-term difference $A_\infty$-algebra, where the bilinear map $d_2 : (A \oplus M_0) \times (A \oplus M_0) \rightarrow M_1$ is given by
    \begin{align*}
        d_2 ((a, u), (b, v) ):= \theta (a, v) + \theta (u, b), \text{ for } (a, u), (b, v) \in A \oplus M_0.
    \end{align*}
\end{prop}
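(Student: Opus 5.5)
By Theorem \ref{thm-2-hom}, the triple $\mathfrak{a} = (M_1 \xrightarrow{\widetilde{\delta}} A \oplus M_0, \odot, \mu)$ is already a $2$-term $A_\infty$-algebra. So the plan is only to check that $\mathfrak{d} = (d_0 = d \oplus \Delta_0, d_1 = \Delta_1, d_2)$ is a difference operator on it. That means verifying the compatibility $d_0 \circ \widetilde{\delta} = \widetilde{\delta} \circ d_1$ together with the identities (\ref{d1})--(\ref{d4}).

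The compatibility is immediate: $d_0(0,\delta(\xi)) = (0, \Delta_0\delta(\xi)) = (0, \delta\Delta_1(\xi))$. For (\ref{d1}), take $(a,u),(b,v)$ and expand the left-hand side. The $A$-component is $d(a)b + a d(b) + d(a)d(b) - d(ab) = 0$, because $d$ is a difference operator. The $M_0$-component splits into two groups of terms: those with $a$ and $v$ (giving the left side of (\ref{condi1}) for $(a,v)$) and those with $u$ and $b$ (giving the left side of (\ref{condi2}) for $(u,b)$). Hence it equals $\delta(\theta(a,v) + \theta(u,b))$, which is $\widetilde{\delta}(d_2((a,u),(b,v)))$. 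For (\ref{d2}), take $(a,u)$ and $\xi$. Since $(a,u)\odot \xi = a\xi$, the $u$-part drops out, and the identity reduces exactly to (\ref{condi3}); the right-hand side works because $d_2((a,u),(0,\delta\xi)) = \theta(a,\delta\xi)$. The identity (\ref{d3}) follows in the same way from (\ref{condi4}).

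The main obstacle is (\ref{d4}), which is a multilinear bookkeeping argument. Since both sides are trilinear in $((a,u),(b,v),(c,w))$ and every term lands in $M_1$, I would sort the terms by how many $M_0$-inputs they contain. The terms with no $M_0$-input vanish, because $\mu$ and $d_2$ are zero on $A\times A(\times A)$. The terms with two or more $M_0$-inputs also vanish, because $\nu$, $\theta$ and the products $M_0 \cdot M_0$ are all zero. This leaves three cases: the $M_0$-input is $w$ (third slot), $v$ (middle slot), or $u$ (first slot).

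In each case I would expand both sides using $d_0(a,u) = (d(a), \Delta_0(u))$. Each of the seven $\mu(\ldots)$ terms produces the corresponding $\nu$ term. In the first case the $d_2$ terms produce $\theta(b,w)$, $\theta(ab,w)$ and $\theta(a,bw)$, while the term $d_2(x,y)\odot(z+d_0z)$ gives $\theta(a,b)$-type terms, which vanish. The first case then becomes (\ref{condi5}), the second (\ref{condi6}), and the third (\ref{condi7}). In (\ref{condi7}) the leading term $(a+d(a))\theta$ is absent because $\theta(b,c)=0$ when $b,c\in A$.

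Throughout, I would read the $d_1$ appearing in (\ref{condi5})--(\ref{condi7}) as $\Delta_1$, and I would be careful with the ordering of the $d$ and $\Delta_0$ insertions so that the seven $\mu$-terms match the seven $\nu$-terms one for one. With these three cases checked, (\ref{d4}) holds and the proof is complete.
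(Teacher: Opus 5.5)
Your proposal is correct and takes the same route as the paper: invoke Theorem \ref{thm-2-hom}, check (\ref{d1})--(\ref{d3}) componentwise using (\ref{condi1})--(\ref{condi4}), and reduce (\ref{d4}) to (\ref{condi5})--(\ref{condi7}). Your sorting of (\ref{d4}) by the slot holding the single $M_0$-input is exactly the bookkeeping the paper leaves implicit, and you also check $d_0\circ\widetilde{\delta}=\widetilde{\delta}\circ d_1$, which the paper omits; you are right that the $d_1$ in (\ref{condi5})--(\ref{condi7}) must be read as $\Delta_1$.
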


\begin{proof}
    Since $ ( M_1 \xrightarrow{\delta  }  M_0 , \nu )$ is a $2$-term bimodule up to homotopy of $A$, it follows from Theorem \ref{thm-2-hom} that $\mathfrak{a} =  ( M_1 \xrightarrow{\widetilde{\delta } } A \oplus M_0 ~ \! , ~ \! \odot, \mu )$ is a $2$-term $A_\infty$-algebra. It remains to show that $\mathfrak{d} = (d_0 = d + \Delta_0 ~ \!, ~ \! d_1 = \Delta_1 ~ \!, ~ \! d_2)$ is a difference operator on $\mathfrak{a}$. For any $(a, u), (b, v) \in A \oplus M_0$ and $\xi \in M_1$, we observe that
    \begin{align*}
        &d_0 (a, u) \odot (b, v) + (a, u) \odot d_0 (b, v) + d_0 (a, u) \odot d_0 (b, v) - d_0 ((a, u) \odot (b, v) ) \\
        &= \big( 0 ~ \! , ~ \! d(a) v + a \Delta_0 (v) + d(a) \Delta_0 (v) - \Delta_0 (a v) + \Delta_0 (u) b + u d(b) + \Delta_0 (u) d(b) - \Delta_0 (ub) \big) \\
        &= \big(  0 ~ \!, ~ \! \delta ( \theta (a, v) + \theta ( u, b) )  \big) = \widetilde{\delta} ( \theta (a, v) + \theta (u, b) ) = \widetilde{\delta} \big(  d_2 ((a, u), (b, v) )  \big)
    \end{align*}
    and
    \begin{align*}
        &d_0 (a, u) \odot \xi + (a, u) \odot d_1 (\xi) + d_0 (a, u) \odot d_1 (\xi) - d_1 ((a, u) \odot \xi ) \\
        &= d(a) \xi + a \Delta_1 (\xi) + d(a) \Delta_1 (\xi) - \Delta_1 (a \xi) \\
        &= \theta (a , \delta (\xi)) = d_2 ((a, u), (0, \delta (\xi)) ) = d_2 ((a, u), \widetilde{\delta} (\xi) ).
    \end{align*}
    Similarly, the condition (\ref{condi4}) can be used to show that
    \begin{align*}
        d_1 (\xi) \odot (a, u) + \xi \odot d_0 (a, u) + d_1 (\xi) \odot d_0 (a, u) - d_1 (\xi \odot (a, u) ) = d_2 ( \widetilde{\delta} (\xi), (a, u)).
    \end{align*}
    This verifies the identities (\ref{d1})-(\ref{d3}). Finally, by using (\ref{condi5})-(\ref{condi7}), one may also verify the identity (\ref{d4}). Hence $\mathfrak{d} = (d_0 = d + \Delta_0 ~ \! , ~ \! d_1= \Delta_1 ~ \!, ~ \! d_2)$ is a difference operator on $\mathfrak{a}$, and concludes the proof.
\end{proof}

The $2$-term difference $A_\infty$-algebra obtained in the above proposition is called the semidirect product of the difference algebra $(A, d)$ with the given $2$-term bimodule up to homotopy over it. Then it is easy to see that if the given $2$-term bimodule up to homotopy $((M_1 \xrightarrow{\delta} M_0, \nu), \Delta_0, \Delta_1, \theta)$ is skeletal (resp. strict) then the corresponding semidirect product as a $2$-term difference $A_\infty$-algebra is also skeletal (resp. strict).

\medskip

Combining Proposition \ref{prop-2-diff-hom} with the construction described by the functor ${\sf T}$ (see in the proof of Theorem \ref{thm-main}), we get the following result.

\begin{prop}
    Let $(A, d)$ be a difference algebra and $((M_1 \xrightarrow{\delta} M_0, \nu), \Delta_0, \Delta_1, \theta)$ be a $2$-term bimodule up to homotopy over it. Then 
    \begin{align*}
        \big( ( C = (A \oplus M_0\oplus M_1 \rightrightarrows A \oplus M_0), \bullet, \mathcal{A}) , D, \mathcal{D} \big)
    \end{align*}
    is a difference associative $2$-algebra, where the structure operations are given by
\begin{align*}
&\qquad s (a, u, \xi) = (a, u), \qquad t (a, u, \xi) = (a, u + \delta (\xi)), \qquad i_{(x, u)} = (x, u, 0), \\
& \qquad \qquad (a, u, \xi) \circ (b, v, \eta) = (a, u, \xi + \eta), \text{ when } t (a, u, \xi) = s (b, v, \eta),\\
& \qquad \qquad \qquad \qquad (a, u, \xi) \bullet (b, v, \eta) = (ab ~ \! , ~ \! a v + u b ~ \! , ~ \!  a \eta + \xi b), \\
& \mathcal{A}_{(a, u), (b, v), (c, w)} = ( ab c ~ \! , ~ \! (ab) w + (a v) c + (u b) c ~ \! , ~ \! \nu (a, b, w) + \nu (a, v, c) + \nu (u, b, c)  ),\\
& \qquad \qquad D_0 (a, u) = (d(a), \Delta_0 (u)), \qquad D_1 (a, u, \xi) = (d(a), \Delta_0 (u), \Delta_1 (\xi)), \\
&  \qquad \qquad \qquad \qquad \mathcal{D}_{(a, u), (b, v)} = \big( ab ~ \! , ~ \! a v + ub ~ \! , ~ \!  \theta (a, v) + \theta (u, b) \big), 
\end{align*}
for $(a, u), (b, v), (c, w) \in A \oplus M_0$ and $\xi, \eta \in M_1$.
\end{prop}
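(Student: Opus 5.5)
The plan is to obtain this as a direct consequence of Proposition \ref{prop-2-diff-hom} followed by the functor ${\sf T}$ from the proof of Theorem \ref{thm-main}. By Proposition \ref{prop-2-diff-hom}, the semidirect product
\begin{align*}
(\mathfrak{a}, \mathfrak{d}) = \big( ( M_1 \xrightarrow{\widetilde{\delta}} A \oplus M_0, \odot, \mu), (d \oplus \Delta_0, \Delta_1, d_2) \big)
\end{align*}
is a $2$-term difference $A_\infty$-algebra. Since ${\sf T}$ sends $2$-term difference $A_\infty$-algebras to difference associative $2$-algebras, ${\sf T}(\mathfrak{a}, \mathfrak{d})$ is a difference associative $2$-algebra. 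It therefore suffices to check that the structure maps of ${\sf T}(\mathfrak{a}, \mathfrak{d})$ agree with those listed in the statement, after the obvious identification $C_1 = (A \oplus M_0) \oplus M_1 \cong A \oplus M_0 \oplus M_1$.

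The identification is made term by term, as follows.

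\begin{enumerate}
\item Objects are $C_0 = A \oplus M_0$ and morphisms are $C_1 = (A\oplus M_0)\oplus M_1$. The source, target, identities and composition come directly from the formulas for ${\sf T}$:
\begin{itemize}
\item $s(x,h)=x$;
\item $t(x,h)=x+\widetilde{\delta}(h)$, which for $x=(a,u)$ and $h=\xi$ gives $(a, u+\delta(\xi))$;
\item $i_x=(x,0)$;
\item composition adds the arrow components.
\end{itemize}

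\item For the product of morphisms, ${\sf T}$ prescribes
\begin{align*}
(x,h)\bullet(y,k) = (x\odot y,\; x\odot k + h\odot y + \widetilde{\delta}(h)\odot k).
\end{align*}
With $x=(a,u)$, $h=\xi$, $y=(b,v)$ and $k=\eta$ this becomes $(ab,\, av+ub,\, a\eta+\xi b + (0,\delta(\xi))\odot \eta)$. The last term vanishes because $\xi\odot\eta=0$ in Theorem \ref{thm-2-hom}, more precisely because $(0,\delta(\xi))\odot\eta = 0\cdot \eta = 0$.

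\item The associator is $\mathcal{A}_{x,y,z} = ((x\bullet y)\bullet z, \mu(x,y,z))$. Expanding $((a,u)\odot(b,v))\odot(c,w)$ gives $(abc,\,(ab)w+(av)c+(ub)c)$, and the arrow component is the $\mu$ of Theorem \ref{thm-2-hom}. This matches the stated formula.

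\item The difference functor is $D_0 = d\oplus\Delta_0$ and $D_1((a,u),\xi)=(d_0(a,u),d_1(\xi)) = (d(a),\Delta_0(u),\Delta_1(\xi))$.

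\item The natural isomorphism is $\mathcal{D}_{x,y} = (D_0(x\bullet y), d_2(x,y))$, with arrow part $\theta(a,v)+\theta(u,b)$.
\end{enumerate}

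The only real obstacle is the source component of $\mathcal{D}$. In ${\sf T}$ the source is $D_0(x\bullet y) = (d(ab), \Delta_0(av+ub))$, whereas the statement writes $(ab, av+ub)$. I would treat this as a typographical simplification and state the proposition with source $D_0((a,u)\bullet(b,v))$. The source is forced in any case, since $\mathcal{D}_{x,y}$ must be a morphism out of $D_0(x\bullet y)$.

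With these identifications, the naturality of $\mathcal{D}$ and the commutativity of diagram (\ref{2-diff}) are inherited from the proof of Theorem \ref{thm-main}, namely from (\ref{d2}), (\ref{d3}) and (\ref{d4}). No new computation is needed.
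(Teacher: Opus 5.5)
Your proposal is correct and follows the paper's route. The paper obtains this proposition by applying the functor ${\sf T}$ from the proof of Theorem~\ref{thm-main} to the semidirect product of Proposition~\ref{prop-2-diff-hom}, and your term-by-term identification of the structure maps matches that. Your remark on $\mathcal{D}$ is also right: its source must be $D_0((a,u)\bullet(b,v)) = (d(ab), \Delta_0(av+ub))$, so the components $(ab,\, av+ub)$ written in the statement are a typo.
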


\medskip
    
\noindent  {\bf Acknowledgements.} The author would like to thank the Department of Mathematics, IIT Kharagpur, for providing a beautiful academic atmosphere in which the research was carried out.

\medskip

\noindent {\bf Data Availability Statement.} Data sharing does not apply to this article as no new data were created or analyzed in this study.


\begin{thebibliography}{BFGM03}

\bibitem{abad-crainic} C. A. Abad and M. Crainic, Representations up to homotopy of Lie algebroids, {\em J. Reine Angew. Math.} 663 (2012), 91–126.


   \bibitem{baez-crans} J. C. Baez and A. S. Crans, Higher-dimensional algebra VI: Lie $2$-algebras, {\em Theor. Appl.Categ.} 12 (2004), 492-538.





\bibitem{board} J. M. Boardman and R. M. Vogt, Homotopy invariant algebraic structures on topological spaces, Lecture Notes in Math., Vol. 347, Springer-Verlag, Berlin-New York, 1973.



 \bibitem{bott} R. Bott and L. W. Tu, Differential forms in algebraic topology, Grad. Texts in Math., Springer-Verlag, New York-Berlin, 1982.



 \bibitem{das-hom} A. Das, Hom-associative algebras up to homotopy, {\em J. Algebra} 556 (2020), 836–878.


  \bibitem{das} A. Das, Cohomology and deformations of crossed homomorphisms, {\em Bull. Belgian Math. Soc. Simon Stevin} Vol. 28, Issue 3 (2022), 381-397. 



 \bibitem{das-mandal} A. Das and A. Mandal, Extension, deformation and categorification of AssDer pairs, {\em Theor. Appl. Categ.} 45 (2026), no. 1, pp 1-32.

 \bibitem{das-ram} A. Das and R. Mandal, Quasi-twilled associative algebras, deformation maps and their governing algebras, to appear in {\em Homology Homotopy Appl.}, available at arXiv:2409.00443.




   \bibitem{getzler} E. Getzler, Lie theory for nilpotent $L_\infty$ algebras, {\em Ann. of Math.} 170 (2009), 271-301.



  \bibitem{guo-li} L. Guo, Y. Li, Y. Sheng and G. Zhou, Cohomologies, extensions and deformations of differential algebras of arbitrary weight, {\em Theor. Appl. Categ.} 38 (2022), no. 37, pp 1409-1433.




\bibitem{jia} J. Jiang and Y. Sheng, Deformations, cohomologies and integrations of relative difference Lie algebras, {\em J. Algebra} 614 (2023), 535-563.


\bibitem{keller} B. Keller, Introduction to $A_\infty$-algebras and modules, {\em Homology Homotopy Appl.} 3 (2001), no. 1, pp. 1-35.

\bibitem{kh} E. Khmaladze,  On associative and Lie $2$-algebras, {\em Proc. A. Razmadze Math. Inst.} 159 (2012), 57–64.

\bibitem{kolchin} E. R. Kolchin, Differential algebras and algebraic groups, Academic Press, New York, 1973.


   \bibitem{lada-markl} T. Lada and M. Markl, Strongly homotopy Lie algebras, {\em Comm. Algebra} 23 (1995), 2147-2161.

   \bibitem{lada-s} T. Lada and J. Stasheff, Introduction to SH Lie algebras for physicists, {\em Int. J. Theor. Phys.} 32 (1993), 1087-1103.

   \bibitem{laza} A. Lazarev and M. Movshev, On the cohomology and deformations of differential graded algebras, {\em J. Pure Appl. Algebra} 106 (1996), 141-151.

   \bibitem{loday} J.-L. Loday, On the operad of associative algebras with derivation, {\em Georgian Math. J.} 17 (2010), 347–372.

   \bibitem{lyu} W. Lyu, Z. Qi, J. Yang and G. Zhou, Formal deformations, cohomology theory and $L_\infty[1]$-structures for differential Lie algebras of arbitrary weight, {\em J. Geom. Phys.} 205 (2024), 105308.

   \bibitem{magid} A. R. Magid, Lectures on differential Galois theory, University Lecture Series 7, Amer. Math. Soc., 1994.

\bibitem{marco} M. Manetti, Lie methods in deformation theory, Springer Monogr. Math. Singapore, 2022.




 \bibitem{ritt} J. F. Ritt, Differential equations from the algebraic standpoint, {\em Amer. Math. Soc. Colloq. Publ., 14} (1934), Amer. Math. Soc., New York.




\bibitem{sheng-zhu} Y. Sheng and C. Zhu, Semidirect products of representations up to homotopy, {\em Pacific J. Math.} 249 (2011), no. 1, 211–236.

\bibitem{stas} J. Stasheff, Homotopy associativity of $H$-spaces. I, II, {\em Trans. Amer. Math. Soc.} 108 (1963), 293-312.



 \bibitem{voro} Th. Voronov, Higher derived brackets and homotopy algebras, {\em J. Pure Appl. Algebra} 202 (2005), 133--153.

   \bibitem{zhang-liu} S. Zhang and J. Liu, On Rota-Baxter Lie $2$-algebras, {\em Theor. Appl. Categ.} 39 (2023), no. 19, pp 545-566.

\end{thebibliography}
\end{document}